\newcommand{\R}{\mathbb{R}}
\newcommand{\qb}{{\bf q}}
\newtheorem{prop}{Proposition}
\newtheorem{lemma}{Lemma}
\newtheorem{theo}{Theorem}
\newtheorem{conj}{Conjecture}
\newtheorem{definition}{Definition}
\newtheoremstyle{PropositionNum}
        {\topsep}{\topsep}              
        {\itshape}                      
        {}                              
        {\bfseries}                     
        {.}                             
        { }                             
        {\thmname{#1}\thmnote{ \bfseries #3}}
\theoremstyle{PropositionNum}
\newtheorem{propn}{Proposition}
\begin{document}

\title{On central configurations of twisted crowns}
\author{E. Barrab\'es\footnote{esther.barrabes(at)udg.edu, U. de Girona} \& J.M. Cors\footnote{cors(at)epsem.upc.edu, U. Polit\`ecnica de Catalunya}}
\date{June 22, 2015}



\maketitle

{\bf Abstract}

We consider the planar central configurations of the Newtonian $\kappa n$-body problem consisting in $\kappa$ groups of $n$-gons where all $n$ bodies in each group have the same mass, called $(\kappa, n)$-crown. We study the location and the number of central configurations when $\kappa=2$.  For $n=3$ the number of central configurations varies depending on the mass ratio, whereas for $n\geq 4$ the number is at least three.  We also prove that for $n\geq 3$  there always exist three disjoint regions where the configuration can be located. Finally, we study which $(\kappa, n)$-crowns are convex.

\section{Introduction}

In the $n$-body problem a configuration is {\it central} if the acceleration
vector for each body is a common scalar multiple of its position vector (with
respect to the center of mass). The study of central configurations allows to obtain explicit solutions of the $n$-body problem where the shape remains constant up to rescaling and rotation. While much is known about specific cases, usually
involving symmetry or assuming that some bodies are infinitesimally small, less is known
about the general structure of the set of central configurations. See \cite{Saari05} for a introduction to the subject.

There are numerous results in the literature demonstrating
the existence of planar central configurations with specific symmetries. The problem consisting in $p$ groups of regular $n$-gons where all $n$ bodies in each group have the same mass have been studied by several authors. In \cite{Corbera2010}, the existence of central configurations is proved when the masses are at the vertices of $p$ homothetic ({\it nested}) regular $n$-gons for all $p\geq2$ and $n\geq2$. In \cite{Llibre2009}, specific cases when $p=3,4$ are studied. 
In \cite{2015ZhaoChen} the existence of central configurations
is proved in the $(pn + gn)$-body problem. In that case, $p$ regular $n$-gons are
nested, and $g$ regular $n$-gons are rotated by $\pi/n$ ({\it twisted})
compared to the first $p$ $n$-gons.  Recently \cite{2015Montaldi}  describe a uniform proof of all previous existence
results, using well-known arguments for the existence of symmetric solutions to variational
problems.

There also exist general results in the case of two regular $n$-gons. In \cite{YuZhang2012} the authors prove that a necessary condition for a central configuration is that the angle between the two $n$-gons have to be $0$ or $\pi/n$, that is, nested or twisted. In \cite{2015YuZhang}  the authors prove that a necessary condition for a central configuration is that the two twisted $n$-gons must have the same number of bodies. 

In the nested case of two $n$-gons, in \cite{Moeckel1995}, the authors prove that for every mass ratio, there are exactly two
planar central configurations. As the masses are varied, spatial central configurations appear through bifurcation from
planar ones. In particular, spatial configurations can be found which are
arbitrarily close to being planar. Other works on nested two $n$-gons are \cite{Zhang2001}, \cite{Zhang2002}.





In this paper we will focus our study to central configurations of twisted $n$-gons, that we will call {\it twisted crowns} (see Definition 1). 
Although the paper is focussed mainly in the case of twisted crowns with two $n$-gons, we include here the derivation of the general equations for central configurations of $\kappa$ $n$-gons, each one with $n$ bodies of the same mass (see Section 2). Some results when $\kappa > 2$ will be presented in a forthcoming paper. 

The main goal is to count the number of central configurations of two twisted $n$-gons. Specifically, when $n=3$ we give the exact number, that varies between one and three, whereas for $n\geq 4$ we prove there always exist at least three. Moreover, we describe the set of admissible places where the two twisted $n$-gons can be located (see Section 3).
Finally, in Section 4 we study which twisted crowns are convex. 

The paper include an Appendix where the detailed proof of some technical results are given.


\section{Definitions and equations}
Consider the planar Newtonian $\kappa n$-body problem consisting in $\kappa$ groups of $n$ bodies where all $n$ bodies in the $j$-th group have equal mass $m_j$, $j=1,\ldots,\kappa$.
Let $\qb_{ji}\in \R^2$, $i=1,\ldots,n$, $j=1,\ldots,\kappa$, be the position of each body, in a reference frame where the center of mass is at the origin of coordinates. It is well known (see \cite{Saari05}) that a \emph{central configuration} of the $\kappa n$-body problem is a configuration $\qb=(\qb_{11},\qb_{12},\ldots,\qb_{\kappa n})\in \R^{2\kappa n}$ such that, for a value of $\lambda\in \R$, satisfies the equation
\begin{equation}
 \nabla U(\qb) + \lambda M \qb =0,
 \label{eq:cc}
\end{equation}
where $U$ is the Newtonian potential
$$ U(\qb) =
\sum_{j=1}^{\kappa} \sum_{i=1}^n \left(
\sum_{l=i+1}^n \dfrac{m_j^2 }{||\qb_{ji}-\qb_{jl}||} + \sum_{l=j+1}^n  \sum_{\nu=1}^n \dfrac{m_j m_l}{||\qb_{ji}-\qb_{l\nu}||}
\right),
$$
and $M$ is the diagonal matrix with diagonal $m_1,\ldots,m_1,m_2,\ldots,m_2,\ldots,m_{\kappa},\ldots,m_{\kappa}$ (each mass $m_j$ repeated $n$ times).

We are interested in central configurations such that all the bodies in the same group form a regular $n$-gon (we also
 call the groups {\it rings}).

\begin{definition}
A central configuration formed by $\kappa$ groups of $n$ bodies in a regular $n$-gon, such that all the masses of the same group are equal, is called a \emph{crown of $\kappa$ rings} or a $(\kappa,n)$--crown.
\end{definition}

We call $\qb_{j1}$ the \emph{leader} of the group, so once its position is known, all the others bodies in the same group are fixed. Then introducing polar coordinates, we have that
\begin{equation}
\qb_{j1} = a_j e^{i\varpi_j},\; \qb_{jk}=\qb_{j1}e^{i2\pi(k-1)/n}, \quad k=2,\ldots,n, \; j=1,\ldots,\kappa,
\label{eq:positions}
\end{equation}
where $\varpi_j$ and $a_j$ are, respectively, the polar angle of the leader and the radius of the $j$-th ring. Without lost of generality  $\varpi_j\in(-\pi/n,\pi/n]$,
$j=1,\ldots,\kappa$. 

It could seem natural to consider an increasing sequence of radius, so that the first ring is in a circle smaller than the second one, and so on. Nevertheless, we will not make any assumption on the ``order'' of the radii of the rings, that is, there is no order established beforehand on the sequence $(a_j)_{j=1,\ldots,\kappa}$. We will explain our choice with more detail later on.

The system of equations given in~(\ref{eq:cc}) consists in $2\kappa n$ equations but, due to the symmetries of the problem, only $2\kappa$ are different. The equations related to the leaders are
$$ \dfrac{\partial U}{\partial \qb_{j1}} + \lambda m_j \qb_{j1} =0,\quad  j=1,\ldots,\kappa.$$
Equivalently, using (\ref{eq:positions}), we get
\begin{equation}
\begin{split}
\dfrac{m_j}{a_j^2}  \sum_{k=1}^{n-1}  \dfrac{e^{i2\pi k/n}-1}{(2-2\cos(2k\pi/n))^{3/2}}  & + \\
\sum_{\underset{l\neq j}{l=1}}^{\kappa} m_l \sum_{k=1}^n &
\dfrac{a_l e^{i(\varpi_l-\varpi_j+2\pi k/n)}-a_j}{(a_l^2+a_j^2-2a_la_j\cos(\varpi_l-\varpi_j+2\pi k/n))^{3/2}} +\lambda a_j=0,
\end{split}
\label{eq:cc3}
\end{equation}
for $ j=1,\ldots,\kappa$.

We have that both, the real and the imaginary parts of each equation in (\ref{eq:cc3}), must be zero.
Since the imaginary part of the first sum in~(\ref{eq:cc3}) is always zero,
then we have to impose that the imaginary part of the second sum in~(\ref{eq:cc3}) must be zero, that is,
\begin{equation}
\sum_{\underset{l\neq j}{l=1}}^{\kappa} m_l \sum_{k=1}^n
\dfrac{a_l \sin (\varpi_l-\varpi_j+2\pi k/n)}{(a_l^2+a_j^2-2a_la_j\cos(\varpi_l-\varpi_j+2\pi k/n))^{3/2}}=0, \quad j=1,\ldots,\kappa.
\label{eq:i}
\end{equation}

The difference $\varpi_l-\varpi_j$ represents how much one ring is rotated with respect to the other. Notice that, if
for all pairs $j$ and $l$, $j\neq l$, $|\varpi_l-\varpi_j|$ is 0 or $\pi/n$, then all $\kappa$ equations in~(\ref{eq:i})
are satisfied.

\begin{definition}
Consider the  $j$-th and $k$-th rings of a $(\kappa,n)$-crown. When $\varpi_j-\varpi_k=0$, the two rings are \emph{nested}. When  $|\varpi_j-\varpi_k|=\pi/n$, the two rings are \emph{twisted}. A $(\kappa,n)$--crown that contains at least two twisted rings is called a twisted crown, whereas if all the rings are nested, is called a nested crown.
\end{definition}

Yu and Zang (\cite{YuZhang2012}) show that a central configuration formed by two $n$-gons with equal masses in each ring must be nested or twisted. That is, the only $(2,n)$-crowns are those nested or twisted.

From now on we only consider nested or twisted crowns. Without lost of generality, in a nested crown we can take $\varpi_j=0$ for all $j=1,\ldots,\kappa$. In a twisted crown we always can take $\varpi_1=0$. Then, when $\kappa=2$ the only possibility for $\varpi_2$ is $\pi/n$.
When $\kappa=3$, there is also only one possibility: one of the rings is rotated an angle of $\pi/n$ radians with respect to the other two. As we have not consider any order in the radii, it is enough to take $\varpi_2=\pi/n$ and $\varpi_3=0$. Depending on the relative values of the radii $a_1,a_2,a_3$ the rotated ring will be the inner one, the one in the middle, or the outer one. In general, when $\kappa\geq 4$, the different scenarios can be obtained as follows: for just one rotated ring, take $\varpi_2=\pi/n$ and $\varpi_3=\ldots=\varpi_{\kappa}=0$, for two rotated rings, take $\varpi_2=\varpi_3=\pi/n$ and $\varpi_4=\ldots=\varpi_{\kappa}=0$, and so on. See Figure~\ref{fig:twistedcrowns}.

\begin{figure}[!ht]
\centering
\includegraphics[scale=0.6]{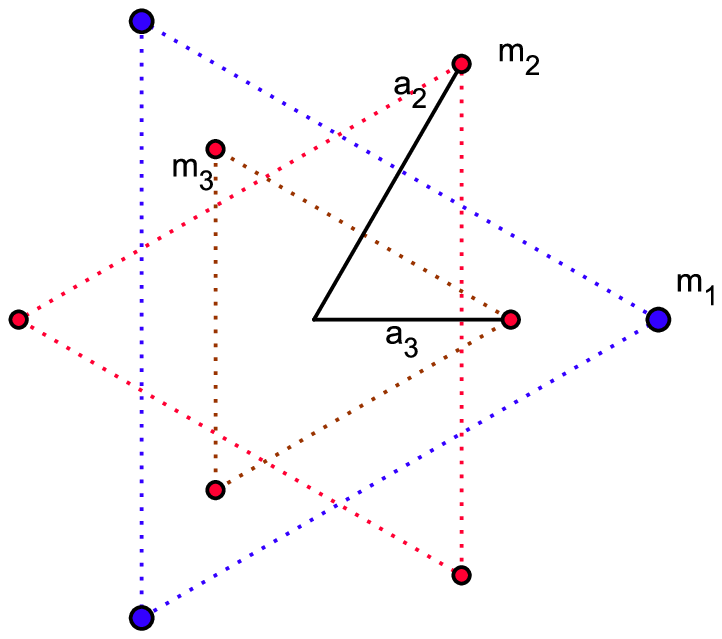}
\includegraphics[scale=0.58]{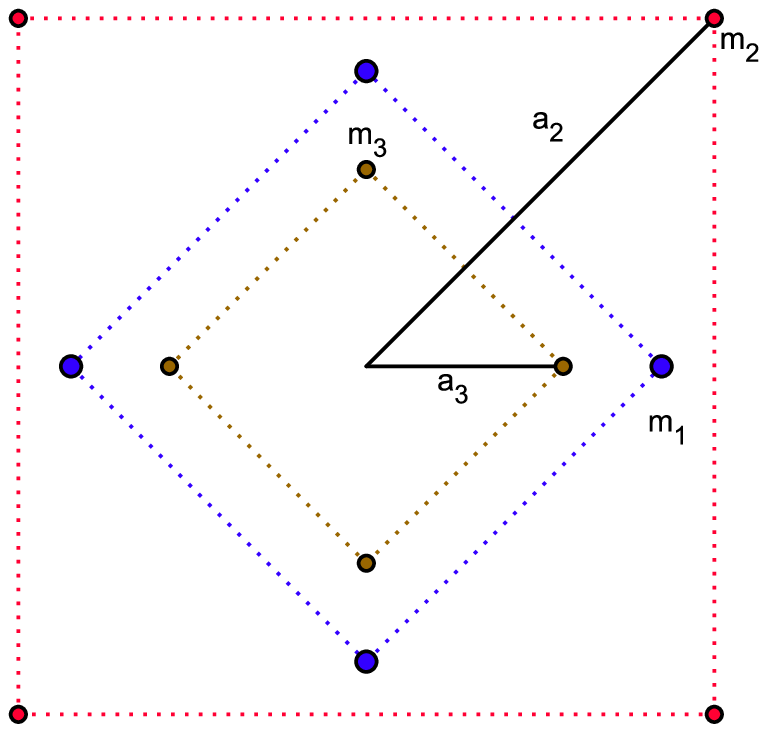}
\caption{Two examples of twisted crowns: left, a $(3,3)$-crown; right, a $(3,4)$-crown. In both we have taken $a_1=1$}
\label{fig:twistedcrowns}
\end{figure}

Therefore, a $(\kappa,n)$-crown is obtained if the real part of System of equations (\ref{eq:cc3}) is given by
\begin{equation}
-\dfrac{m_j}{a_j^2} S_n -
\sum_{\underset{l\neq j}{l=1}}^{\kappa} m_l C_{jl} +\lambda a_j=0, \quad j=1,\ldots,\kappa,
\label{eq:cc4}
\end{equation}
where
\begin{equation}
S_n =\frac14 \sum_{k=1}^{n -1} \frac1{\sin(k\pi/n )},
\label{eq:SN}
\end{equation}
and
\begin{equation}
C_{jl}=C_{jl}(a_j,a_l)=\sum_{k=1}^{n }
\frac{a_j-a_l\cos(\varpi_j-\varpi_l+2k\pi/n ) }{ (a_j^2+a_l^2-2a_ja_l\cos(\varpi_j-\varpi_l+2k\pi/n ) )^{3/2}},
\quad j,l=1,\ldots,{\kappa}.
\label{eq:coefs}
\end{equation}

Fixed all the polar angles of the leaders, System~(\ref{eq:cc4}) has $\kappa$ equations and $2\kappa +1$ unknowns: $\lambda$, the masses $m_j$ and the radii $a_j$, for $j=1,\ldots,\kappa$. Eliminating the term $\lambda$ subtracting the first equation from all the others, we obtain the following system of  $\kappa -1$ equations with $2\kappa$ unknowns,

\begin{equation}
(C_{j1}-S_n a_j)+\sum_{\underset{l\neq j}{l=2}}^{\kappa}(C_{jl}-a_jC_{1l})m_l+
\left(\dfrac{ S_n }{a_j^2}-a_jC_{1j}\right)m_j= 0, \; j=1,\ldots, \kappa.
\label{eq:cc5}
\end{equation}

Therefore, a twisted or nested $(\kappa,n)$--crown is obtained if and only if system~(\ref{eq:cc5}) is satisfied.

\begin{definition}
For any fixed value of $\kappa$ and $n$, and given a set of positive masses $\{m_1,m_2,\ldots,m_{\kappa}\}$ we say that $\{a_1,a_2,\ldots,a_{\kappa}\}$ is a set of \emph{admissible} values if there exists a $(\kappa,n)$--crown with masses $m_1,m_2,\ldots,m_{\kappa}$ satisfying system~(\ref{eq:cc5}). We denote by ${\cal A}_{\kappa}(n)$ the set of all admissible values.
\end{definition}

Notice that we can normalize system~(\ref{eq:cc5}) taken $m_1=1$ and $a_1=1$.

The case of nested $(2,n)$--crowns was solved by Moeckel and Sim\'o in \cite{Moeckel1995}. From now on we focus our attention to the twisted case.

\section{Twisted $(2,n)$-crowns}
\label{2tccnom0}

We consider the case of twisted $(2,n)$-crowns, so that $\varpi_1=0$ and $\varpi_2=\pi/n$.
Recall that the masses of one ring are equal to one and are located on a circle of radius one, whereas the
other ring with mass $m_2=m$ is located on a circle of radius $a_2=a$. Then, the set of Equations~(\ref{eq:cc5}) reduces to the following equation
\begin{equation}
(C_{21}(a)-S_{n}a)+\left(  \dfrac{S_{n}}{a^{2}}-aC_{12}(a)\right)
m=0,
\label{eq:cck2}
\end{equation}
where
\begin{equation}
\begin{array}{rcl}
C_{12}(a) =C_{12}(1,a)& = &\displaystyle\sum_{k=1}^{n}\frac{1-a\cos((2k-1)\pi/n)}%
{(1+a^{2}-2a\cos((2k-1)\pi/n))^{3/2}},\\
C_{21}(a) =C_{21}(a,1) & = &\displaystyle\sum_{k=1}^{n}\frac{a-\cos((2k-1)\pi/n)}%
{(1+a^{2}-2a\cos((2k-1)\pi/n))^{3/2}},
\end{array}
\label{eq:functionsA}
\end{equation}
are the functions defined in (\ref{eq:coefs}) for $\varpi_2-\varpi_1=\pi/n$.
Equation~(\ref{eq:cck2}) was also obtained by Roberts in \cite{PhDRoberts} and Yu and Zhang in \cite{YuZhang2012}.

As any central configuration problem, Equation~(\ref{eq:cck2}) is linear with respect to the mass $m$. Then, solving it as a function of the radius, $a>0$, we have that

\begin{equation}
m=H(a)=\dfrac{a^{2}(S_{n}a-C_{21}(a))}{S_{n}-a^{3}C_{12}(a)}=a^{2}\dfrac{F(a)}{G(a)}.
\label{eq:k2}%
\end{equation}

Clearly, any point of the half plane $(a,m)$ with $m>0$ satisfying (\ref{eq:k2}) (so that
 $a\in {\cal A}_2(n)$) correspond to a twisted $(2,n)$-crown.

Notice that $H(1)=1$, which means that if the second ring is located on the same circle than the first one, then the masses of all  bodies are equal, and the regular $2n$-gon is obtained.

Equation~(\ref{eq:cck2}) and (\ref{eq:k2}) are similar to the ones obtained by Moeckel and Sim\'o in \cite{Moeckel1995}
for the nested case $\varpi_1=\varpi_2$, changing $\cos((2k-1)\pi/n)$ by $\cos(2k\pi/n)$.
In the case of nested $(2,n)$--crowns, the authors prove that for any positive mass ratio $m$ there
exists only one central configuration, that is, only one admisible value $a>0$.
The proof in \cite{Moeckel1995}, is based in the fact that the function
$$ \sum_{k=1}^n \dfrac1{(1+a^{2}-2a\cos(k\pi/n))^{1/2}} $$
and all of its derivatives are positive. In our case the function involved is
$$ \phi(a)=\sum_{k=1}^n \dfrac1{(1+a^{2}-2a\cos((2k-1)\pi/n))^{1/2}}. $$
The function $F$, defined in~(\ref{eq:k2}) can be written as
$$ F(a)=S_na+\dfrac{d\phi}{da}(a).$$
In the case of the nested $(2,n)$--crowns, $F$ is the sum of two increasing functions in the interval $(0,1)$, which implies that there exists only one root of $F(a)=0$ in that interval. But for twisted crowns, $\phi$ and its derivatives do not have constant monotonicity, so we can not use the same arguments. Furthermore, function $H(a)$ behaves differently for $n=2,3,4$ or bigger than 5 and
we will see that, for each value of $m$, there can be more than one $(2,n)$-crowns, depending on the number of bodies in each ring. Thus, we have adopted a different approach.

The aim of this Section is twofold: first to describe the admissible set ${\cal A}_2(n)$, which is commonly called the \emph{inverse problem}, and second to study the number of central configurations that exist for any fixed value of $m>0$. Both objectives are obtained through the study of the function $H(a)$.

We start with some useful properties of the functions $H$, $F$ and $G$.
\begin{lemma}
\label{lemma1}
Let $H(a)$, $F(a)$ and $G(a)$ be the functions defined in (\ref{eq:k2}) for
$a\in[0,\infty)$.  Then, $F(a)$ and $G(a)$ are analytic functions, and for any $a>0$ the following statements hold:
\begin{enumerate}

\item $F(a)=a\,G(1/a)$,

\item $H(1/a)=1/H(a)$.

\end{enumerate}
\end{lemma}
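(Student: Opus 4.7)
The plan is to verify both identities by direct substitution, after first establishing a clean symmetry relation between $C_{12}$ and $C_{21}$ under the involution $a\mapsto 1/a$. Analyticity is essentially for free: writing
$$1+a^2-2a\cos\theta = (a-\cos\theta)^2+\sin^2\theta,$$
for each $\theta=(2k-1)\pi/n$ with $k=1,\ldots,n$, the sine is nonzero (since $(2k-1)/n$ is never an even integer), so the denominators in $C_{12}$ and $C_{21}$ are strictly positive for all $a\ge 0$. Hence $C_{12},C_{21}$ (and therefore $F,G,H$) are analytic on $[0,\infty)$, with $H$ meromorphic.

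The core computation is the identity
$$C_{12}(1/a)=a^2\,C_{21}(a),\qquad C_{21}(1/a)=a^2\,C_{12}(a).$$
To see the first, in the $k$-th term of $C_{12}(1/a)$ multiply numerator and denominator by the appropriate power of $a$: the numerator $1-(1/a)\cos\theta$ equals $(a-\cos\theta)/a$, while the radical becomes
$$\bigl(1+1/a^2-(2/a)\cos\theta\bigr)^{3/2}=\bigl(1+a^2-2a\cos\theta\bigr)^{3/2}/a^3,$$
so each summand picks up a factor $a^2$, and summing over $k$ gives $a^2C_{21}(a)$. The second identity follows by swapping the roles of ``$1$'' and ``$a$'' in the numerator, which is exactly the same manipulation.

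Part 1 is then immediate:
$$aG(1/a)=a\Bigl(S_n-\frac{1}{a^3}C_{12}(1/a)\Bigr)=aS_n-\frac{1}{a^2}\bigl(a^2C_{21}(a)\bigr)=S_na-C_{21}(a)=F(a).$$
The analogous computation with $C_{21}(1/a)$ in place of $C_{12}(1/a)$ yields $aF(1/a)=G(a)$. Combining these two relations,
$$H(1/a)=\frac{1}{a^2}\,\frac{F(1/a)}{G(1/a)}=\frac{1}{a^2}\,\frac{G(a)/a}{F(a)/a}=\frac{G(a)}{a^2F(a)}=\frac{1}{H(a)},$$
which is part 2. The only step requiring any care is the bookkeeping of the $a$-powers in the substitution; everything else is formal. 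Notably, no monotonicity or sign information about $\phi$ is needed here, so the failure of those properties in the twisted case (discussed just before the lemma) is irrelevant to the symmetry identities.
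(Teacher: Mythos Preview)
Your argument is essentially the paper's one-line proof written out in full: the key identity $C_{12}(1/a)=a^{2}C_{21}(a)$ is exactly what the paper invokes, and your derivations of parts 1 and 2 from it are correct.

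One small slip in the analyticity paragraph: the claim that $\sin\theta\neq 0$ for every $\theta=(2k-1)\pi/n$ is false when $n$ is odd (take $k=(n+1)/2$, giving $\theta=\pi$). What you actually need---and what your parenthetical ``never an even integer'' does give---is that $\cos\theta\neq 1$, so that $1+a^{2}-2a\cos\theta=(a-\cos\theta)^{2}+\sin^{2}\theta$ cannot vanish for $a\ge 0$ (when $\sin\theta=0$ one has $\cos\theta=-1$ and the expression equals $(1+a)^{2}>0$). With that correction the analyticity claim stands and the rest is fine.
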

The proof is straightforward from the definitions and the fact that $C_{12}(1/a)=a^{2}C_{21}(a)$ for $a>0$.

Two important consequences follow from these two properties.
From the first one we get that, to know the sign of the function $H$, it is enough to know the sign
of $F$ (or $G$).
And from the second property we have that if $a\in{\cal A}_{2}(n)$, so
it is $1/a$. That means that the $(2,n)$-crowns corresponding to $(a,m)$ and $(1/a,1/m)$ are
qualitatively the same, in the sense that one is just the other one conveniently scaled.
In this paper, two central configurations are considered ``different'' if one cannot be obtained from the other one using this symmetry. Due that, we only consider values of $m\geq 1$.

The study reveals that different results are obtained for $n=2$, $n=3$ or $n\geq 4$. The case $n=2$ is already known till 1932 from paper by MacMillan and Bartky, \cite{MacMillanBartky} (see also \cite{Zhang2002}): for any positive value of $m$, there exists only one admissible value of the radius $a\in (1/\sqrt{3},\sqrt{3})$ such that the two twisted segments are a central configuration of the 4-body problem. Furthermore, it is not difficult to see that $m>1$ for $a<1$, so that the bigger masses are always located in the inner ring, and the limit values  $a=\sqrt{3}$ and $a=1/\sqrt{3}$ correspond to, respectively, the limit cases $m=0$ and $m=\infty$.

Next, we study the cases $n=3$ and $n\geq 4$ separately.

\subsection{Twisted $(2,3)$--crowns}\label{sect-n3}

We consider two twisted equilateral triangles. First, we will see that the set of all admissible values is the union of three disjoin intervals.
Second, we will show that given $m>0$, the number of central configurations varies from one to three.

The rang of admissible values depends on the number of zeros of $F(a)$ and their location.
\begin{prop}
\label{propF3}
Let $F$ be the function defined in (\ref{eq:k2}). Then, for $n=3$, the following statements hold.
\begin{enumerate}
  \item The equation $F(a)=0$ has exactly two positive solutions, denoted by $z_1$ and $z_2$;
  \item $3/8<z_1<7/16$ and $25/16<z_2<13/8$;
  \item $F(a)<0$ for $a\in (z_1,z_2)$ and $F(a)>0$ for $a\in(0,z_1)\cup (z_2,\infty)$.
\end{enumerate}
\end{prop}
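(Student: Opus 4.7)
For $n=3$ the sums collapse to elementary expressions using $\cos(\pi/3) = \cos(5\pi/3) = 1/2$ and $\cos\pi = -1$, giving $S_3 = 1/\sqrt{3}$ and
\begin{equation*}
F(a) = \frac{a}{\sqrt{3}} - \frac{2a-1}{(a^2-a+1)^{3/2}} - \frac{1}{(a+1)^2}.
\end{equation*}
With this closed form $F$ is analytic on $[0,\infty)$, $F(0)=0$ with $F'(0)= 1/\sqrt{3} + 3/2 >0$, and $F(a)\sim a/\sqrt{3}$ as $a\to\infty$; so $F$ is positive on a right neighbourhood of $0$ and for $a$ large.

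The existence half of item~2 is aimed at the intermediate value theorem. The plan is to evaluate $F$ at the four rational abscissae $3/8,\ 7/16,\ 25/16,\ 13/8$: at these points the quadratic $a^2-a+1$ equals $49/64,\ 193/256,\ 481/256,\ 129/64$ respectively, and each value of $F$ reduces to an explicit combination of $1/\sqrt{3}$ with $(a^2-a+1)^{-3/2}$ and $(a+1)^{-2}$. Rational enclosures of $\sqrt{3},\ \sqrt{193},\ \sqrt{481}$ and $\sqrt{129}$ yield rigorous sign verifications $F(3/8)>0$, $F(7/16)<0$, $F(25/16)<0$, $F(13/8)>0$, producing at least one root in each of $(3/8,7/16)$ and $(25/16,13/8)$.

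To prove items~1 and~3 (exact count and sign pattern) the plan is to analyse the derivative
\begin{equation*}
F'(a) = \frac{1}{\sqrt{3}} + \frac{8a^2-8a-1}{2(a^2-a+1)^{5/2}} + \frac{2}{(a+1)^3},
\end{equation*}
and to prove it has exactly two zeros $a_M < a_m$ in $(0,\infty)$, both lying in $(0,1)$. The checks $F'(0)>0$, $F'(1/2)<0$, $F'(1)>0$, together with $F'(a)\to 1/\sqrt{3}>0$ as $a\to\infty$, give at least one zero in $(0,1/2)$ and one in $(1/2,1)$. Once these are the only zeros, $F$ is monotone on each of $(0,a_M]$, $[a_M,a_m]$ and $[a_m,\infty)$; combining $F(0)=0$, $F(a_M)>0$, $F(a_m)\leq F(1)<0$ and $F(a)\to+\infty$ forces exactly one root of $F$ in $(a_M,a_m)$ and one in $(a_m,\infty)$, with $F<0$ between them and $F>0$ elsewhere on $(0,\infty)\setminus\{z_1,z_2\}$, which are precisely items~1 and~3.

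The main obstacle is the uniqueness step for $F'$: the middle summand $(8a^2-8a-1)/[2(a^2-a+1)^{5/2}]$ is neither monotone nor of constant sign, so a direct convexity argument is unavailable. The plan for this step, which I would relegate to the Appendix, is to isolate the $1/\sqrt{3}$ term in $F'(a)=0$, clear the common denominator and square the resulting identity to produce a polynomial equation in $a$. Its positive real roots can then be counted rigorously by Sturm's theorem (or by a Descartes-type sign count after a suitable affine substitution), and any spurious roots introduced by the squaring are discarded using the sign of the factor that was moved across the equality, leaving exactly two genuine zeros of $F'$ and closing the argument.
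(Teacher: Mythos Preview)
Your strategy is sound and genuinely different from the paper's. The paper does not analyse $F'$ at all; instead it rewrites $F(a)=0$ as
\[
g_1(a)=(1+a^2-a)^{3/2}=\dfrac{3(2a-1)(1+a)^2}{\sqrt{3}a^3+2\sqrt{3}a^2+\sqrt{3}a-3}=:g_2(a),
\]
so that one side is a single explicit algebraic function and the other is rational. The monotonicity and extrema of $g_1$ are immediate, and those of $g_2$ follow from Descartes' rule applied to the (low-degree) numerator of $g_2'$ and a Bolzano location of its unique pole. Counting intersections of these two graphs gives the exactly-two-roots conclusion and their bracketing. Your route---three monotone pieces of $F$ determined by the two critical points of $F$---is conceptually more direct and yields the sign pattern in item~3 for free once the critical-point count is established; the paper's reformulation buys lighter algebra, since all the root-counting reduces to Descartes' rule on polynomials of degree at most three.

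One point to tighten: a single squaring will not produce a polynomial equation from $F'(a)=0$, because the equation carries two independent irrationalities, the constant $1/\sqrt{3}$ and the factor $(a^2-a+1)^{5/2}$. Isolating $1/\sqrt{3}$ and squaring removes the first but leaves the second (it reappears in the cross term on the other side), so you must then isolate the $(a^2-a+1)^{5/2}$ factor and square again. This is routine and Sturm still applies, but the resulting polynomial has degree in the twenties rather than the single-digit degrees arising in the paper's $g_2'$; you should say ``square twice'' and be prepared for that bookkeeping. With that correction the argument closes as you describe.
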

The details of the proof are given in the Appendix.

\begin{theo}
\label{theoA3}
The set of admissible values for a $(2,3)$-crown is
$$ {\cal A}_2(3)= (0,z_1) \cup (1/z_2,z_2) \cup (1/z_1,\infty),$$
where $z_1$ and $z_2$ are given in Proposition~\ref{propF3}. Furthermore, for any positive value of $m$ there
exist an admissible value $a=H^{-1}(m)$.
\end{theo}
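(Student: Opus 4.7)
The plan is to read off the sign of $H(a)=a^{2}F(a)/G(a)$ from Proposition~\ref{propF3} via Lemma~\ref{lemma1}, and then to supply preimages using the intermediate value theorem on the middle component.

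First I would transfer the information about $F$ to $G$. Lemma~\ref{lemma1}.1 gives $G(a)=a\,F(1/a)$ for $a>0$, so the positive zeros of $G$ are exactly $1/z_{1}$ and $1/z_{2}$, and $\mathrm{sign}\,G(a)=\mathrm{sign}\,F(1/a)$; by Proposition~\ref{propF3}, $G>0$ on $(0,1/z_{2})\cup(1/z_{1},\infty)$ and $G<0$ on $(1/z_{2},1/z_{1})$. The sharp numerical bounds of Proposition~\ref{propF3} force
$$ z_{1}<\tfrac{7}{16}<\tfrac{8}{13}<\tfrac{1}{z_{2}}\quad\text{and}\quad z_{2}<\tfrac{13}{8}<\tfrac{16}{7}<\tfrac{1}{z_{1}}, $$
so the four critical points interleave as $0<z_{1}<1/z_{2}<1<z_{2}<1/z_{1}<\infty$. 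Comparing the signs of $F$ and $G$ on each of the resulting subintervals, they share a sign exactly on $(0,z_{1})\cup(1/z_{2},z_{2})\cup(1/z_{1},\infty)$. Since admissibility means $m=H(a)>0$ and $a^{2}>0$, this identifies $\mathcal{A}_{2}(3)$ as claimed. The endpoints of these intervals are either zeros of $F$ (where $H=0$) or zeros of $G$ at which $F\neq 0$ (where $H$ has a pole), so none of them contribute admissible values.

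For the surjectivity statement, I would show that $H$ already sweeps out all of $(0,\infty)$ on the middle interval $(1/z_{2},z_{2})$. Since $G$ has no zero in this open interval, $H$ is continuous there. Near the left endpoint, $G(1/z_{2})=0$ with $G<0$ on $(1/z_{2},1/z_{1})$ while $F(1/z_{2})<0$ is a finite nonzero constant (as $1/z_{2}\in(z_{1},z_{2})$), so $H(a)\to+\infty$ as $a\to(1/z_{2})^{+}$. Near the right endpoint, $F(z_{2})=0$ while $G(z_{2})=z_{2}F(1/z_{2})<0$ is bounded away from zero, so $H(a)\to 0$; since $H>0$ on the interval, this limit is $0^{+}$. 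The intermediate value theorem then yields $H((1/z_{2},z_{2}))=(0,\infty)$, producing for every $m>0$ an admissible $a=H^{-1}(m)$ in this middle component.

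The main obstacle has in fact already been cleared by Proposition~\ref{propF3}: once one has existence and sharp localisation of the two zeros of $F$, the argument above is essentially bookkeeping. What is really doing the work is the numerical sharpness of the bounds, since it is precisely this that forces the interleaving $z_{1}<1/z_{2}<z_{2}<1/z_{1}$; without it, the three admissible components could merge and $\mathcal{A}_{2}(3)$ would not take the three-interval form asserted.
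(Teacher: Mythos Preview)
Your proof is correct and follows essentially the same route as the paper's: transfer the zeros and sign of $F$ to $G$ via Lemma~\ref{lemma1}, use the bounds in Proposition~\ref{propF3} to establish the interleaving $z_1<1/z_2<z_2<1/z_1$ (the paper phrases this as $z_1z_2<1$, which is exactly what your explicit inequalities $z_1<7/16$, $z_2<13/8$ give), read off the three admissible intervals by comparing signs, and then apply the intermediate value theorem on $(1/z_2,z_2)$ using $H\to+\infty$ at the left endpoint and $H\to 0$ at the right. Your added remarks on why the endpoints are excluded and why the sharp localisation of $z_1,z_2$ is the essential input are accurate and slightly more explicit than the paper's version.
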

\begin{proof}
Using Lemma~\ref{lemma1} the two positive solutions of $G(a)=0$ are $1/z_2$ and $1/z_1$. From Proposition \ref{propF3},
$z_1 z_2<1$ and therefore $0<z_1<1/z_2<z_2<1/z_1$.  Furthermore, $G(a)<0$ only for $a\in(1/z_2,1/z_1)$.
Combining the signs of the two functions $F$ and $G$, the admissible values $a>0$ are
$$(0,z_1) \cup (1/z_2,z_2) \cup (1/z_1,\infty).$$
Using the fact that $\lim_{a\to 1/z_2^+}  H(a)=\infty$ and $H(z_2)=0$  we have that for any positive value of $m$ there exists a value of the radius $a>0$ within the above admissible set ${\cal A}_2(3)$.
\end{proof}

As in the case $n=2$, there exist values of $a>0$ which are not admissible. In this case, there are two annuli in which the second ring can not be placed. The main difference with the case $n=2$, is that the second ring can be placed as far as we want, or as close to the origin as we want. See the approximate values of $z_1$ and $z_2$ in Table~\ref{table1}.

Notice that $H(0)=H(z_1)=0$, and the function $H$ is continuous in $a\in(0,z_1)$,  so that there exists a maximum value of $H$ in this interval. Therefore, only masses up to a certain value can be placed in the second ring with radius
$a\in (0,z_1)$. Due to the symmetry given by Lemma \ref{lemma1}, the opposite occurs in the interval $(1/z_1,\infty)$, where the function $H$ tends to infinity at both limits of the interval, so that it has a minimum. Thus, only masses bigger than a certain value can be placed far from the origin. See Figure~\ref{fig:FunH_n3}.
\begin{figure} [!h]
    \centering
    \includegraphics{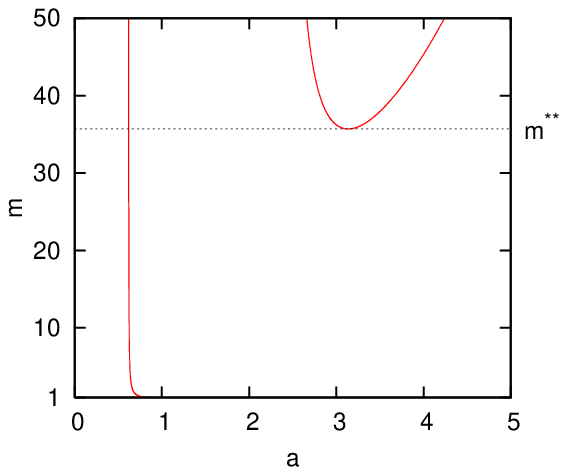}
    \includegraphics{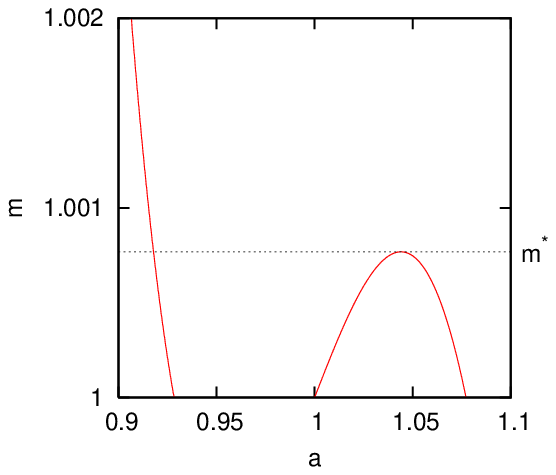}
\caption{Graph of the function $H$ for $n=3$ and $m>1$ (right: detail for $a\in[0.9,1.1]$). The horizontal dotted lines correspond to
the values $m=m^{**}$ and $m=m^*$ (see Theorem~\ref{theoCC3}).}
\label{fig:FunH_n3}
\end{figure}

From these observations, we have that the number of central configurations that exist for a fixed mass ratio $m>0$ can be bigger than one. Next theorem states exactly the number of central configurations for each positive mass ratio.

\begin{theo} \label{theoCC3}
For any positive value of the mass ratio $m\geq 1$, there exists at least one central configuration of the 6-body problem corresponding to a twisted $(2,3)$--crown. Furthermore, there exist values $m^*,m^{**}>1$ such that
\begin{enumerate}
	\item for any $m\in(1,m^*)$, there exists exactly three different twisted $(2,3)$--crowns, all of them
	with admissible values $a\in (1/z_2,z_2)$;
	\item for any $m\in(m^{**},\infty)$, there exists exactly three different twisted $(2,3)$--crowns, one of them
	with admissible value $a\in (1/z_2,1)$ and the other two with admissible values $a\in (1/z_1,\infty)$;
	\item for any $m\in (m^*,m^{**})$ there exists only one twisted $(2,3)$--crown with admissible value $a\in (1/z_2,1)$,
	\item for $m=1,m^*,m^{**}$ the number of different twisted $(2,3)$--crowns is exactly two;
\end{enumerate}
where $z_1$ and $z_2$ are given in Proposition~\ref{propF3}.
\end{theo}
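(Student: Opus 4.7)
The plan is to analyze the function $H$ component by component on ${\cal A}_2(3)=(0,z_1)\cup(1/z_2,z_2)\cup(1/z_1,\infty)$, establish its critical structure on each piece, and then read off the counts by a pure intermediate value theorem argument; the involution $H(1/a)=1/H(a)$ from Lemma~\ref{lemma1} roughly halves the work and ties critical points into reciprocal pairs.

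I would start by recording the boundary behavior. On $(0,z_1)$ the factor $a^2$ in $H$ together with $F(z_1)=0$ give $H(0)=H(z_1)=0$, and $F,G>0$ on this interval (by Proposition~\ref{propF3} and Lemma~\ref{lemma1}), so $H>0$ in the interior and attains an interior maximum $h_1$ at some point $a_1\in(0,z_1)$. By the symmetry, the reciprocal interval $(1/z_1,\infty)$ inherits the picture $H\to\infty$ at both endpoints with an interior minimum $m^{**}:=1/h_1$ at $1/a_1$. On the middle interval $(1/z_2,z_2)$ the signs of $F$ and $G$ both flip to negative, so $H>0$, with $H(1/z_2^+)=+\infty$, $H(z_2^-)=0^+$, and the fixed value $H(1)=1$.

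The heart of the proof is to show that on $(1/z_2,z_2)$ the function $H$ has exactly two critical points, a local minimum at some $a^*<1$ and a local maximum at $1/a^*>1$, with reciprocal values $1/m^*<1<m^*$; the reciprocal structure is automatic from $H(a)H(1/a)=1$ (which makes $u\mapsto\ln H(e^u)$ an odd function), so the content lies in the count \emph{exactly two}. This is where the main obstacle sits: one must verify that the rational--trigonometric numerator of $H'$ has precisely one zero on the half-interval $(1/z_2,1]$. My approach would be to compute $H'$ explicitly from $H=a^2F/G$, simplify using Lemma~\ref{lemma1}, and then bound sign changes of the resulting expression in $a$ and the three values $\cos(\pi/3),\cos(\pi),\cos(5\pi/3)$ by elementary but careful estimates, in the spirit of the Appendix proof of Proposition~\ref{propF3}. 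I would expect this analytic verification, together with the numerical bracketing needed to conclude the chain $1<m^*<m^{**}$ (the left inequality is free from $H(1)=1$ lying strictly between the critical values, while the right one requires comparing two specific numerical quantities), to live in the Appendix.

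Granting the critical structure, the counting is routine. For $m\geq 1$ the interval $(0,z_1)$ contributes nothing because $h_1=1/m^{**}<1$. On $(1/z_2,z_2)$ the profile $\infty\searrow 1/m^*\nearrow m^*\searrow 0$ and the intermediate value theorem give three preimages of $m$ when $1\leq m<m^*$, two preimages when $m=m^*$ (one simple on the initial descent and one double at the local max), and a single preimage located in $(1/z_2,1)$ when $m>m^*$. On $(1/z_1,\infty)$ the valley $\infty\searrow m^{**}\nearrow\infty$ gives zero, one, or two preimages according as $m<m^{**}$, $m=m^{**}$, or $m>m^{**}$. Summing these contributions over admissible intervals, and then invoking the identification $(a,m)\sim(1/a,1/m)$ to collapse into a single crown the lone reciprocal pair $\{a,1/a\}\subset(1/z_2,z_2)$ that appears at $m=1$ (where the identification is active precisely because $m=1/m$), reproduces the counts of parts~1--4 and, in particular, guarantees at least one crown for every $m\geq 1$.
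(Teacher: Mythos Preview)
Your outline matches the paper's strategy closely: both reduce to determining the critical structure of $H$ on the admissible set and then count preimages by the intermediate value theorem. The paper restricts to $a>1$ and invokes the symmetry of Lemma~\ref{lemma1}, while you work interval by interval; these are equivalent. The critical-point count you single out as ``the heart of the proof'' is precisely what the paper defers to Lemma~\ref{lemH3} in the Appendix. One small omission: your counting on $(1/z_1,\infty)$ relies on the clean valley shape $\infty\searrow m^{**}\nearrow\infty$, but in your first paragraph you only establish \emph{existence} of an interior extremum there (via the reciprocal interval), not uniqueness. The paper's Lemma~\ref{lemH3} handles both pieces at once by showing that $H$ has exactly two critical points on all of $(1,\infty)$, one in $(1,z_2)$ and one in $(1/z_1,\infty)$.

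The one substantive difference is how you obtain $m^*<m^{**}$. You propose direct numerical bracketing of the two critical values. The paper instead proves (second item of Lemma~\ref{lemH3}) that $H(a)=a$ has $a=1$ as its \emph{only} positive solution. From this, since $H(z_2)=0<z_2$ and $H(a)\neq a$ on $(1,z_2)$, one gets $H(a)<a$ throughout $(1,z_2)$, hence $m^*=H(a^*)<a^*$; and since $H\to+\infty$ at $(1/z_1)^+$, one gets $H(a)>a$ throughout $(1/z_1,\infty)$, hence $m^{**}=H(a^{**})>a^{**}$. Combining, $m^*<a^*<a^{**}<m^{**}$. Your numerical route would certainly succeed here (the gap is large, $m^*\approx 1.0008$ versus $m^{**}\approx 35.7$), but the paper's device sidesteps the need to locate the critical points precisely enough to bound $H$ at them.
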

\begin{proof}
From Proposition~\ref{theoA3}, we have that for any positive value of $m$ there exist at least one central configuration.
To establish the number of central configurations, we look for the number of positive solutions of the equation $H(a)=m$, for a fixed positive value $m\geq 1$. In order to do that, we study the monotonicity and the critical points of the function $H$ in the admissible set
$$[1,z_2) \cup (1/z_1,\infty).$$
Using Lemma~\ref{lemma1}, we recover the behavior of $H(a)$ in $(0,z_1]\cup(1/z_2,1]$. 
On one hand, the function $H$ has only two critical points for $a>1$, at $a^*\in(1,z_2)$ and $a^{**}\in(1/z_1,+\infty)$ (see 
Appendix, Lemma~\ref{lemH3}).
Using that  $H(1)=1$, $H(z_2)=0$, and $\lim_{a\to (1/z_1)^+} H(a)$ and
$$\lim_{a\to +\infty} H(a)=\lim_{a\to 0}\dfrac1{H(a)}=+\infty,$$
it follows that
$a^*$ is a local maximum and $a^{**}$ is a local minimum. On the other hand, 
the equation $H(a)=a$ has only one positive solution at $a=1$ (see again Lemma~\ref{lemH3}). This implies that $H(a^*) < H(a^{**})$.
Defining $m^*=H(a^*) < m^{**}=H(a^{**})$, we conclude the proof (see Figure~\ref{fig:FunH_n3}).
\end{proof}

The approximate bifurcation values are $m^*=1.0007682$ and $m^{**}=35.70017694$.

\subsection{Twisted $(2,n)$--crowns with $n\geq 4$} \label{sect-n4}
Next, we consider the case of $n\geq 4$. We will see that, as in the case of $(2,3)$--crowns, the set of all admissible values consists in three disjoint intervals, and there are two annuli where no ring can be placed. On the contrary, when $n\geq 4$, for any value of the mass parameter $m>0$, there always exist at least three central configurations.

Following the arguments of the previous subsection, next proposition states some properties about the zeros of the function $F$.

\begin{prop}
\label{propFn}
Let $F$ be the function defined in (\ref{eq:k2}). Then, for $n\geq 4$, the following statements hold.
\begin{enumerate}
  \item The equation $F(a)=0$ has at least two positive solutions, denoted by $z_1$ and $z_2$.
  \item Suppose that $z_1$ and $z_2$ are the only positive solutions of $F(a)=0$. Then
  \begin{enumerate}
    \item $z_1 < 1 < z_2$ and $z_1z_2 >1$.
    \item $F(a)<0$ for $a\in(z_1,z_2)$, and $F(a)>0$ for $a\in(0,z_1)\cup(z_2,\infty)$.
  \end{enumerate}
\end{enumerate}
\end{prop}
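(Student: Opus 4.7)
The plan is to mirror the strategy of Proposition \ref{propF3}: pin down the sign of $F$ at a few strategic points ($a=0^+$, $a=1$, and $a\to\infty$) and then invoke the intermediate value theorem. The boundary behavior together with $F(1)<0$ will give part 1 and the inequality $z_1<1<z_2$ in part 2(a); the claim $z_1z_2>1$ will be the main obstacle.

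First I would compute the boundary data. From the identity $\sum_{k=1}^n \cos((2k-1)\pi/n)=0$ (valid for $n\geq 2$) one gets $C_{21}(0)=0$, hence $F(0)=0$. Differentiating $C_{21}$ term by term at $a=0$ and using $\sum_{k=1}^n\cos^2((2k-1)\pi/n)=n/2$ for $n\geq 3$ yields $C_{21}'(0)=-n/2$, so $F'(0)=S_n+n/2>0$ and $F>0$ on a right neighborhood of $0$. As $a\to\infty$, each summand of $C_{21}(a)$ decays like $1/a^2$ while $S_n a$ grows linearly, so $F(a)\to+\infty$. The key technical input is $F(1)<0$ for $n\geq 4$: rewriting $S_n$ and $C_{21}(1)$ with the common angle $\pi/(2n)$ gives
\begin{equation*}
F(1)=\frac{1}{4}\left[\sum_{j=2,4,\dots,2n-2}\frac{1}{\sin(j\pi/(2n))}-\sum_{j=1,3,\dots,2n-1}\frac{1}{\sin(j\pi/(2n))}\right],
\end{equation*}
and the second sum dominates because it contains the small-angle terms $1/\sin(\pi/(2n))$ and $1/\sin((2n-1)\pi/(2n))$, much larger than anything in the first sum. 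A pairwise comparison using convexity of $\csc$ on $(0,\pi)$, or a direct check for small $n$ combined with an asymptotic estimate for large $n$, should close this step.

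Combining these sign computations, the intermediate value theorem produces zeros $z_1\in(0,1)$ and $z_2\in(1,\infty)$, establishing part 1 and the bound $z_1<1<z_2$. If these are the only positive zeros, the sign pattern in part 2(b) is then forced by continuity.

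The remaining inequality $z_1z_2>1$ is the hard part. By Lemma \ref{lemma1}(1), $G$ has zeros exactly at $1/z_1$ and $1/z_2$; since $G(0)=S_n>0$ and $G(+\infty)=S_n+n/2>0$, the sign pattern of $G$ under the two-zero hypothesis is $+,-,+$ on $(0,1/z_2)$, $(1/z_2,1/z_1)$, $(1/z_1,\infty)$. Therefore $z_1z_2>1$ is equivalent to $G(z_1)<0$. Eliminating $S_n$ through $C_{21}(z_1)=S_n z_1$ and using the algebraic identity $C_{12}(a)-aC_{21}(a)=(1-a^2)\Phi(a)$, where $\Phi(a)=\sum_{k=1}^n(1+a^2-2a\cos((2k-1)\pi/n))^{-3/2}$, one factors out $(1-z_1)>0$ and reduces $G(z_1)<0$ to
\begin{equation*}
(1+z_1+z_1^2+z_1^3+z_1^4)\,S_n \;<\; z_1^3(1+z_1)\,\Phi(z_1).
\end{equation*}
This inequality fails for $z_1$ small (as the $n=3$ case indicates) and holds near $z_1=1$, so its verification requires an a priori estimate placing $z_1$ close enough to $1$ when $n\geq 4$. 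I expect this technical estimate to be the content of the appendix lemma underlying the proposition.
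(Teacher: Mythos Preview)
Your treatment of part~1, part~2(b), and the inequality $z_1<1<z_2$ is essentially the paper's argument: the paper packages the three sign facts $F(0^+)>0$, $F(1)<0$, $F(+\infty)=+\infty$ into Lemma~\ref{lemFn} (citing Roberts' thesis for $F(1)<0$ rather than giving a direct $\csc$-convexity argument) and then applies the intermediate value theorem exactly as you do.

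Where you diverge from the paper is in the hardest step, $z_1z_2>1$. Your reduction to $G(z_1)<0$ and then to the inequality
\[
(1+z_1+z_1^2+z_1^3+z_1^4)\,S_n \;<\; z_1^3(1+z_1)\,\Phi(z_1)
\]
is algebraically correct, but the paper does not go this route and the appendix lemma you anticipate does not supply a bound tailored to this inequality. Instead the paper splits cases. For $n\ge 5$ it invokes Lemma~\ref{lemZn} (again from Roberts), which gives directly $1/z_1 < n/(n-1) < z_2$, so $z_1z_2>1$ drops out without ever looking at $G(z_1)$ or $\Phi$. For $n=4$ that lemma is false, and the paper simply evaluates $F(2/3)>0$ and $F(3/2)<0$ by hand, forcing $z_1>2/3$ and $z_2>3/2$, hence $z_1z_2>1$. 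Your approach could in principle be completed once one has $z_1>1-1/n$, but it is more work than the paper's: the paper avoids the $\Phi$-inequality entirely by bounding $z_2$ from below rather than analysing $G$ at $z_1$. The case split $n=4$ versus $n\ge 5$ is something your sketch does not anticipate and would likely be needed in your approach as well.
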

The details of the proof can be found in the Appendix.

At this point we would like to remark that we have not been able to proof analytically that the number of positive solutions of $F(a)=0$ is exactly two, as numerical simulations suggest. The inability to proof the uniqueness of the two zeros of the function $F$ can be found also in Roberts in \cite{PhDRoberts}, where he studied the existence of central configurations of two twisted rings plus a central mass. The same situation occurs in \cite{2004Bangetal}, where the relative equilibria positions for an infinitesimal mass submitted to the Newtonian attraction of $n$ bodies in a regular $n$-gon are studied. As fas as we know, the uniqueness of the two zeros of the function $F$ has not been solved.

From now on we will assume the following conjecture.

\begin{conj}\label{conj1}
The function $F$, defined in (\ref{eq:k2}), has only two positive solutions, denoted by $z_1$ and $z_2$.
\end{conj}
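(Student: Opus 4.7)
The plan is to reduce zero-counting for $F$ to zero-counting for $F'$. One first checks the boundary data: $F(0)=0$, $F'(0)=S_n+n/2>0$ (using the identities $\sum_{k=1}^n\cos((2k-1)\pi/n)=0$ and $\sum_{k=1}^n\cos^2((2k-1)\pi/n)=n/2$ for $n\ge 3$), and $F(a)\sim S_n a\to+\infty$. A short computation gives $F(1)=2S_n-S_{2n}$, which is strictly negative since the odd-indexed subseries of $S_{2n}$ produces exactly $C_{21}(1)>S_n$ (this can be checked at $n=2,3,4$ and asymptotically from $S_n\sim \frac{n\log n}{\pi}$). So $F$ is positive on a right-neighborhood of $0$, negative at $a=1$, positive for large $a$, giving at least one zero in $(0,1)$ and one in $(1,\infty)$ — recovering Proposition~\ref{propFn}. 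The conjecture reduces to the claim that $F'$ has exactly two positive zeros $b_1<b_2$, each a genuine sign change, for then $F$ is increasing on $(0,b_1)$ from $0$, decreasing on $(b_1,b_2)$, and increasing on $(b_2,\infty)$ to $+\infty$, which forces exactly two positive zeros (the alternative sign pattern of $F'$ is incompatible with $F(\infty)=+\infty$).

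To analyze
\begin{equation*}
F'(a)=S_n-\sum_{k=1}^n\frac{s_k^2-2(a-c_k)^2}{((a-c_k)^2+s_k^2)^{5/2}},\qquad c_k=\cos\tfrac{(2k-1)\pi}{n},\ s_k=\sin\tfrac{(2k-1)\pi}{n},
\end{equation*}
I would exploit two structural facts. First, the pairing $c_{n+1-k}=-c_k$, $s_{n+1-k}=s_k$ lets one rewrite $C_{21}'$ as a sum of rational functions in $(a-c_k)^2$ over $1\le k\le\lceil n/2\rceil$, halving the number of summands. Second, Lemma~\ref{lemma1} gives $F(a)=a\,G(1/a)$, hence a corresponding symmetry between zeros of $F'$ near $0$ and zeros of $G'$ (equivalently $F'$) near $\infty$, restricting attention to $a\ge 1$. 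On this half-line I would attempt to show $F''(a)>0$ for $a$ sufficiently large by isolating the dominant $O(1/a^4)$ tail, and $F''$ single-signed on complementary bounded intervals by bounding the few overlapping "hill" terms $\frac{s_k^2-2(a-c_k)^2}{((a-c_k)^2+s_k^2)^{5/2}}$ one at a time. Together with the reflected information on $(0,1]$, this would pin down $F'$ to being monotone outside a single interval and thus yield exactly two positive zeros.

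The main obstacle, and the reason the authors leave the statement as a conjecture, is that each summand of $C_{21}'$ has its own sign-change interval $|a-c_k|<s_k/\sqrt 2$; for $n\ge 4$ these intervals overlap in a delicate, $n$-dependent way with no uniform convexity or monotonicity of $F'$ or $F''$ available. Clearing denominators produces a polynomial of degree on the order of $5n$, too large for Descartes or Sturm estimates to give the optimal bound of two. A computer-assisted proof for each fixed $n$, using interval arithmetic to certify the sign of $F''$ on a finite partition of $[1,\infty)$ (with the symmetry $a\leftrightarrow 1/a$ halving the work), looks like the most feasible concrete route; a proof uniform in $n$ seems to require a new ingredient, perhaps an integral representation of $\phi(a)=\sum_k((a-c_k)^2+s_k^2)^{-1/2}$ exhibiting hidden convexity of $F'$ on the relevant subintervals.
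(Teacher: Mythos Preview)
The paper does not prove this statement: it is explicitly stated as a \emph{conjecture} and then assumed for the rest of Section~\ref{sect-n4}. The authors say outright that they ``have not been able to proof analytically that the number of positive solutions of $F(a)=0$ is exactly two,'' and cite Roberts' thesis and Bang--Elmabsout as places where the same difficulty arises and remains unresolved. So there is no ``paper's own proof'' to compare against.

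Your boundary analysis is correct and matches what the paper \emph{does} prove. The computations $F(0)=0$, $F'(0)=S_n+n/2>0$, $F(1)=2S_n-S_{2n}<0$, and $F(a)\to+\infty$ are exactly Lemma~\ref{lemFn} (attributed there to Roberts), and together they give Proposition~\ref{propFn}: at least two positive zeros, one in $(0,1)$ and one in $(1,\infty)$. Your reduction ``exactly two zeros of $F$ $\Leftarrow$ exactly two sign-changing zeros of $F'$'' is also valid. But from that point on your proposal is a program, not a proof: the convexity/monotonicity control of $F'$ or $F''$ that you describe is precisely the missing ingredient, and you acknowledge as much in your final paragraph. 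Nothing you write closes the gap, and the obstacles you identify (overlapping sign-change intervals of the summands, polynomial degree $\sim 5n$ after clearing denominators) are the real ones.

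For context, the paper \emph{does} prove the $n=3$ analogue (Proposition~\ref{propF3}), but by a route rather different from yours: instead of differentiating, it rewrites $F(a)=0$ as $g_1(a)=g_2(a)$ with $g_1(a)=(1+a^2-a)^{3/2}$ and $g_2$ an explicit rational function, and then pins down the crossing count via Descartes' rule on the numerator/denominator of $g_2$ together with elementary monotonicity of $g_1$. That works because for $n=3$ the sum defining $C_{21}$ collapses to two explicit terms; for $n\ge 4$ the analogous rewriting produces expressions of growing complexity, which is why neither their method nor yours goes through uniformly in $n$. Your suggestion of an interval-arithmetic verification for each fixed $n$ is in the same spirit as the $n=3$ proof --- a finite, case-specific certificate --- and is probably the realistic route absent a new idea.
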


In Table~\ref{table1} we show the (approximate) values of $z_1$ and $z_2$ for some values of $n$.
\begin{table}[!ht]
\centering
\begin{tabular}{|c|c|c|c|}
\hline
$n$ & $z_1$ & $z_2$
\\
\hline
    3  &  0.413887932417 & 1.619789608802 \\
\hline
    4  &     0.697380509876    &    1.602408486212  \\
\hline
    5    &         0.822182869908 &       1.597921728909
\\
\hline
    6     &        0.884321138125    &    1.592235355387
\\
\hline
    7      &       0.918990363772       & 1.584120901279
\\
\hline
    8       &      0.940138179122  &      1.574515176634
\\
\hline
    9     &        0.953949939513   &     1.564321826382
\\
\hline
   10   &          0.963459881269   &     1.554123467683
   \\
\hline
   $\vdots$ &  $\vdots$ &  $\vdots$
\\
\hline
100 & 0.999674025507    &   1.352557858581
\\
\hline
500 & 0.999986989988   &    1.279569044474
\\
\hline
1000 & 0.999996754292  &     1.256683821749
\\
\hline
5000 & 0.999999869916   &    1.215703126473
\\
\hline
\end{tabular}
\caption{Values of $z_1$ and $z_2$ for the values of $n$ shown.}
\label{table1}
\end{table}

Next result states the rang of admissible values of a twisted $(2,n)$--crown.
\begin{theo}
\label{theoAn}
For $n\geq 4$, the set of admissible values is
\[
{\cal A}_2(n) =(0,1/z_2) \cup(z_1,1/z_1) \cup(z_2,\infty).
\]
where $z_1$ and $z_2$ are given in Conjecture~\ref{conj1}.
\end{theo}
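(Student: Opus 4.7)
The strategy is the exact analogue of the proof of Theorem~\ref{theoA3}: compute the zeros and signs of $G$ from those of $F$ using the reflection symmetry $F(a)=aG(1/a)$ of Lemma~\ref{lemma1}, and then identify the admissible set as the locus where $H(a)=a^2F(a)/G(a)>0$, i.e.\ where $F$ and $G$ have the same sign.

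First I would observe that by Lemma~\ref{lemma1}(1), $G(b)=bF(1/b)$ for $b>0$, so assuming Conjecture~\ref{conj1}, the only positive roots of $G$ are $1/z_1$ and $1/z_2$, and $G(b)$ has the same sign as $F(1/b)$. From Proposition~\ref{propFn}(2)(b) this gives $G(b)<0$ exactly on $(1/z_2,1/z_1)$ and $G(b)>0$ on $(0,1/z_2)\cup(1/z_1,\infty)$.

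The key step is to fix the relative ordering of the four critical abscissae $1/z_2$, $z_1$, $1/z_1$, $z_2$. From Proposition~\ref{propFn}(2)(a), $z_1<1<z_2$ and $z_1z_2>1$, so
\[
0<\tfrac{1}{z_2}<z_1<1<\tfrac{1}{z_1}<z_2.
\]
This partitions $(0,\infty)$ into five open intervals. On each one I would tabulate the signs of $F$ and $G$: on $(0,1/z_2)$ both $F>0$ and $G>0$; on $(1/z_2,z_1)$ we have $F>0$ but $G<0$ (since the interval lies inside $(1/z_2,1/z_1)$); on $(z_1,1/z_1)$ both $F<0$ and $G<0$; on $(1/z_1,z_2)$ we have $F<0$ and $G>0$; and on $(z_2,\infty)$ both $F>0$ and $G>0$. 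Consequently $H(a)=a^2F(a)/G(a)$ is positive exactly on
\[
(0,1/z_2)\cup(z_1,1/z_1)\cup(z_2,\infty),
\]
and this is precisely $\mathcal{A}_2(n)$.

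I do not anticipate a real obstacle, since the heavy lifting is done in Proposition~\ref{propFn} (notably the inequality $z_1z_2>1$, which is what forces the middle admissible interval $(z_1,1/z_1)$ to be nonempty and makes the two gaps $(1/z_2,z_1)$ and $(1/z_1,z_2)$ appear symmetrically). The only thing to be careful about is to cite Conjecture~\ref{conj1} explicitly, since without it one cannot rule out further sign changes of $F$ (and hence of $G$) that would fragment the admissible set further.
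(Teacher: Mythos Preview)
Your proposal is correct and follows essentially the same approach as the paper's proof: both use Lemma~\ref{lemma1} to transfer the zeros and signs of $F$ to $G$, invoke Proposition~\ref{propFn} (under Conjecture~\ref{conj1}) to obtain the ordering $1/z_2<z_1<1<1/z_1<z_2$, and then read off the admissible set as the locus where $F$ and $G$ share the same sign. Your version is simply more explicit in tabulating the signs on the five subintervals, whereas the paper states the ordering and the conclusion directly.
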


\begin{proof}
Suppose that $z_1$ and $z_2$ are the only roots of $F(a)=0$. Using Lemma~\ref{lemma1}
and Proposition~\ref{propFn}, we have that $1/z_1$ and
$1/z_2$ are the only positive solutions of $G(a)=0$,
$$ 1/z_2 < z_1 < 1 < 1/z_1 < z_2.$$
and $H(a)>0$ only if $(0,1/z_2) \cup(z_1,1/z_1) \cup(z_2,\infty)$.
\end{proof}

Next theorem gives the minimum number of central configurations for any given value of the mass ratio $m\geq 1$.

\begin{theo}
\label{theoCCn}
Let be $n\geq 4$ and $z_1$, $z_2$ given in Conjecture~\ref{conj1}. Then, there exists an admissible value $a_1>1$ such that
\begin{enumerate}
  \item for any positive value of the mass ratio $m> 1$ there exists at least three different twisted  $(2,n)$--crowns,
one with admissible value $a\in(1/a_1,1/z_2)$, one with $a\in(1,1/z_1)$ and one with $a\in(a_1,\infty)$;
  \item for $m=1$ there exist at least two different twisted  $(2,n)$--crowns. If $n$ is even, there are exactly two, with admissible values $a=1$ and $a=a_1$, respectively.
  \end{enumerate}
\end{theo}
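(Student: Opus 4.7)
The plan is to prove each existence statement by applying the intermediate value theorem to $H(a)=m$ on the admissible intervals of Theorem~\ref{theoAn}, using the symmetry $H(1/a)=1/H(a)$ from Lemma~\ref{lemma1} to transfer information between the outer component $(z_2,\infty)$ and the inner component $(0,1/z_2)$. The reference point $a_1$ appearing in the statement will be defined as a solution of $H(a)=1$ lying in $(z_2,\infty)$.

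First, I would establish the boundary values of $H$ on the three admissible intervals. Using Lemma~\ref{lemma1} together with Proposition~\ref{propFn}, the positive zeros of $G$ are $1/z_1$ and $1/z_2$, so $H$ has vertical asymptotes at both points; since $H>0$ on ${\cal A}_2(n)$ while $H(z_1)=H(z_2)=0$, these asymptotes must be $+\infty$. A direct computation gives $H(0)=0$, using that the sum in $C_{21}(0)$ vanishes because $\sum_{k=1}^n\cos((2k-1)\pi/n)=0$; by the reciprocal symmetry this yields $H(a)\to+\infty$ as $a\to\infty$. Combined with $H(1)=1$, this provides all the boundary data needed. The point $a_1$ is then produced by IVT on $(z_2,\infty)$: since $H(z_2^+)=0<1<H(+\infty)$ and $H$ is continuous, there exists $a_1\in(z_2,\infty)$ with $H(a_1)=1$, and $a_1>z_2>1$ as required.

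Second, for fixed $m>1$ I would construct the three crowns by three applications of IVT. On $(1,1/z_1)$, $H$ goes from $1$ up to $+\infty$, yielding a solution of $H(a)=m$. On $(a_1,\infty)$, $H$ goes from $1$ to $+\infty$, again yielding a solution. For the third, I would solve $H(a')=1/m$ on $(z_2,a_1)$, where $H$ runs from $0$ to $1$ and $1/m\in(0,1)$ lies in between; then $a=1/a'\in(1/a_1,1/z_2)$ satisfies $H(a)=m$ by Lemma~\ref{lemma1}. Since the three values of $a$ lie in disjoint intervals, the three crowns are distinct.

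For item~2, the existence of at least two crowns when $m=1$ is immediate: $a=1$ gives the regular $2n$-gon, $a=a_1$ gives a second crown, and these are not identified by the reciprocal symmetry because both have $m=1$. The delicate part, and the main obstacle I anticipate, is the \emph{exactly two} assertion for $n$ even. For this one must rule out any further solution of $H(a)=1$ in $[1,1/z_1)\cup(z_2,\infty)$ besides $a=1$ and $a=a_1$. I would attempt a strict monotonicity argument: show that $H-1$ is strictly increasing on $[1,1/z_1)$ so that $a=1$ is its only zero there, and that $H-1$ changes sign exactly once on $(z_2,\infty)$ at $a_1$. The parity of $n$ should enter through the extra rotational symmetry of the planar configuration---when $n$ is even, rotation by $\pi$ preserves each $n$-gon---which pins down the structure of $F$ and $G$ and precludes the additional oscillations that may occur for $n$ odd. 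Extracting this sign structure from the explicit expressions is the step I expect to require the most care; the existence statements themselves are a routine consequence of the boundary analysis.
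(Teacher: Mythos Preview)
Your treatment of item~1 and of the existence half of item~2 matches the paper's argument: you use the same boundary data ($H(0)=H(z_1)=H(z_2)=0$, blow-up at $1/z_1$ and $1/z_2$, $H\to+\infty$ at infinity) and the same reciprocal symmetry from Lemma~\ref{lemma1}, and you define $a_1$ exactly as the paper does, as a root of $H(a)=1$ in $(z_2,\infty)$.

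The genuine gap is in your plan for the ``exactly two'' clause when $n$ is even. You propose to show that $H-1$ is strictly increasing on $[1,1/z_1)$ and changes sign exactly once on $(z_2,\infty)$. But monotonicity of $H$ on the admissible intervals is precisely what the paper is \emph{unable} to prove (they say so explicitly just after the theorem), and your appeal to the rotation-by-$\pi$ symmetry does not translate into any usable sign information on $H'$. The paper sidesteps monotonicity of $H$ entirely: it rewrites $H(a)=1$ as
\[
S_n\,\frac{a^3-1}{a^2}=(a^2-1)\sum_{k=1}^n \frac{\cos\theta_k}{d_k^{3/2}},
\]
factors out $a-1$, and reduces the problem to counting intersections of the two auxiliary functions
\[
g_1(a)=S_n\,\frac{a^2+a+1}{a^2(a+1)},\qquad g_2(a)=\sum_{k=1}^n \frac{\cos\theta_k}{d_k^{3/2}}.
\]
The parity of $n$ enters not through a symmetry of the configuration, but through the algebra of $g_2$: when $n$ is even the angles $\theta_k$ pair up as $\pm\theta_k$, which lets one prove (Lemma~\ref{lemteoCCn}) that $g_2$ is decreasing on $(1,\infty)$ and that $g_2(1)>g_1(1)$, forcing a unique crossing with the also-decreasing $g_1$. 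This factor-then-compare strategy is the missing idea; without it, your monotonicity plan for $H$ is exactly the obstacle the paper could not overcome.
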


\begin{proof}
From Theorem~\ref{theoAn} we know that the mass ratio $m=H(a)$ is positive when
$a\in (0,1/z_2) \cup(z_1,1/z_1) \cup(z_2,\infty)$. Recall that $H(0)=H(z_1)=H(z_2)=0$ and that
$\lim_{a\to (1/z_i)^-}H(a)=+\infty$, $i=1,2$. Furthermore
$$\lim_{a\to+\infty} H(a)=\lim_{a\to+\infty}a^2 \dfrac{F(a)}{aF(1/a)}=+\infty$$
using Lemma~\ref{lemma1} and Lemma~\ref{lemFn} (see Appendix).
Thus, any horizontal line in the half plane $(a,m)$, $m>0$ intersects at least three times the curve
$m=H(a)$. Due to the symmetry $H(1/a)=1/H(a)$ (see Lemma~\ref{lemma1}), the three intersections
correspond to three different central configurations for $m> 1$.
To conclude, the proof of the first statement just take $a_1$ a solution greater than one of $H(a)=1$.

For $m=1$ the previous arguments give that there exists at least two different central configurations.
Let us consider from now on that $n$ is even. We want to show that the equation $H(a)=1$ has exactly
 three solutions at $a_1$, $1$ and $1/a_1$. Due to the symmetry of the problem, 
the admissible values $a_1$ and $1/a_1$ leads to the same central configuration. Then, there exists exactly
two $(2,n)$-crowns when $m=1$ and $n$ is even. 

The equation $H(a)=1$ can be written as
$$ S_n \dfrac{a^3-1}{a^2}=(a^2-1)\sum_{k=1}^n \dfrac{\cos \theta_k}{d_k^{3/2}},$$
where $\theta_k=(2k-1)\pi/n$ and $d_k=1+a^2-2a\cos\theta_k$, $k=1,\ldots,n$.
Clearly $a=1$ is a solution (that was already known), so considering $a\neq 1$ we have to solve the equation

$$g_1(a):=S_n \dfrac{a^2+a+1}{a^2(a+1)} = \sum_{k=1}^n \dfrac{\cos \theta_k}{d_k^{3/2}} =: g_2(a).$$

From Lemma~\ref{lemteoCCn} (see Appendix) we have the following properties:
\begin{enumerate}
\item $g_i(1/a)=a^3g_i(a)$, $i=1,2$;
\item $\exists$ $A>0$ such that for all $a>A$, $g_2(a) < g_1(a)$;
\item when $n$ is even, $g_2(1) > g_1(1)$;
\item  when $n$ is even, $g_1$ and $g_2$ are decreasing functions in $(1,\infty)$.
\end{enumerate}

From the first property we have that if $a_1$ is a solution of $g_1(a)=g_2(a)$, so it is $1/a_1$. Thus we only look for solutions in the interval $(1,\infty)$.
Using the second and third properties and applying Bolzano's theorem, there exists at least one solution.
Finally, from the last property, the solution must be unique.
\end{proof}

In other to obtain an exact number of twisted $(2,n)$--crowns we we should be able to prove that the function $H$ is strictly increasing.
We have not succeeded in that. All the numerical experiments done support the conclusion that the number is exactly three. See Figure~\ref{fig:plot_Hns}, where the set of curves $m=H(a)$ is plotted for three different values of $n$.

\begin{figure} [!h]
\centering
\includegraphics{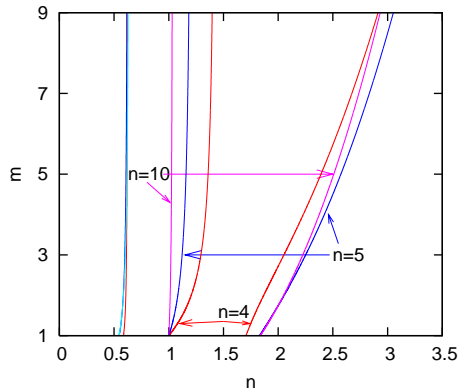}
\caption{The curves $m=H(a)$ for $n=4, 5, 10$.}
\label{fig:plot_Hns}
\end{figure}

\section{Convex twisted $(2,n)$--crowns} \label{convexCC}

We will discuss in this section whether the central configurations obtained are convex or concave.
It is not difficult to see that, given $2n$ bodies in two $n$--gons of radius $1$ and $a$, respectively,
the configuration is convex if and only if
\begin{equation}
a\in {\cal C}_n = \left(\cos \left( \dfrac{\pi}n\right), \dfrac1{\cos \left( \dfrac{\pi}n\right) } \right).
\label{eq:convex}
\end{equation}

For any $n\geq 3$ we have seen that the set of admissible values consists in three disjoint intervals. From now on we will focus our interest on
the interval that contains the value $a=1$, that we call the \emph{central admissible interval}, ${\cal A}_{2C}(n)$. Our purpose is to
prove the following statement.

\begin{prop} \label{prop-ccc}
Consider twisted $(2,n)$--crowns of radius 1 and $a$, with masses $1$ and $m$ respectively. Let $z_1$ and $z_2$ given in Proposition~\ref{propF3} (for $n=3$) and Proposition~\ref{propFn} and Conjecture~\ref{conj1} (for $n\geq 4$). 
\begin{enumerate}
  \item for $n=3$,  $ {\cal A}_2(3)\cap {\cal C}_3={\cal A}_{2C}(3)$, that is, a twisted $(2,3)$--crown is convex if and only if $a$ belongs to the central admissible interval, ${\cal A}_{2C}(3)$.
  \item for $n=4$, $ {\cal A}_2(4)\cap {\cal C}_4={\cal C}_4$, that is, if the twisted $(2,4)$--crown is convex then $a$ belongs to the central admissible interval, ${\cal A}_{2C}(4)$, but not all the values of $a$ within ${\cal A}_{2C}(4)$ correspond to convex configurations.
  \item For $n\geq 5$, if the twisted $(2,n)$--crown is convex, then $a$ belongs to the central admissible interval.
\end{enumerate}
\end{prop}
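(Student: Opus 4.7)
The whole statement reduces to locating the two points $\cos(\pi/n)$ and $\sec(\pi/n)=1/\cos(\pi/n)$ with respect to the break-points $\{z_1,z_2,1/z_1,1/z_2\}$ that describe $\mathcal{A}_2(n)$. By the sign rules for $F$ supplied by Propositions \ref{propF3} and \ref{propFn}, and the identity $F(a)=aG(1/a)$ of Lemma \ref{lemma1}, every such comparison amounts to determining the sign of $F$ at $\cos(\pi/n)$ or at $\sec(\pi/n)$.

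For $n=3$ I would invoke directly the explicit enclosures $z_1<7/16$ and $z_2<13/8$ from Proposition \ref{propF3}. Together with $\cos(\pi/3)=1/2$ and $\sec(\pi/3)=2$ they yield $z_1<\cos(\pi/3)$ and $z_2<\sec(\pi/3)$, whence also $1/z_1>\sec(\pi/3)$ and $1/z_2>\cos(\pi/3)$. Thus $(0,z_1)\cup(1/z_1,\infty)$ is disjoint from $\mathcal{C}_3$ and $(1/z_2,z_2)\subset\mathcal{C}_3$, which is exactly claim~(1).

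For $n=4$ I would evaluate $F$ at the two endpoints $\cos(\pi/4)=\sqrt{2}/2$ and $\sec(\pi/4)=\sqrt{2}$ of $\mathcal{C}_4$. Since $\cos((2k-1)\pi/4)\in\{\pm\sqrt{2}/2\}$ for every $k$, the sum defining $C_{21}$ collapses to a short closed-form expression, and a direct arithmetic check shows $F(\sqrt{2}/2)<0$ and $F(\sqrt{2})<0$. By Proposition \ref{propFn}(2b) this places both endpoints of $\mathcal{C}_4$ strictly inside $(z_1,z_2)$, so $\mathcal{C}_4\subset(z_1,1/z_1)=\mathcal{A}_{2C}(4)$ with strict inclusion (the gaps $(z_1,\sqrt{2}/2)$ and $(\sqrt{2},1/z_1)$ are nonempty); both parts of claim~(2) follow.

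For $n\geq 5$ it suffices to prove the single inequality $F(\sec(\pi/n))<0$. Indeed, Proposition \ref{propFn}(2b) then places $\sec(\pi/n)$ in $(z_1,z_2)$, and via Lemma \ref{lemma1} the quantity $G(\cos(\pi/n))=\cos(\pi/n)\,F(\sec(\pi/n))$ is also negative, so $\cos(\pi/n)\in(1/z_2,1/z_1)$. Hence the two outer components $(0,1/z_2)$ and $(z_2,\infty)$ of $\mathcal{A}_2(n)$ are disjoint from $\mathcal{C}_n$, which is claim~(3). To establish $F(\sec(\pi/n))<0$ I would exploit that the terms $k=1$ and $k=n$ of $C_{21}(\sec(\pi/n))$ simplify cleanly to $\cos^2(\pi/n)/\sin(\pi/n)$ each (the denominator being $\tan^3(\pi/n)$) and that the remaining $n-2$ terms are all positive. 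The main obstacle is that $S_n\sec(\pi/n)$ grows like $(n/2\pi)\log n$, so the two explicit terms alone are not enough for large $n$; the plan is to pair the remaining indices via $k\leftrightarrow n+1-k$ (equal cosines) and compare term by term with the corresponding summands of $\sec(\pi/n)\,S_n$, using the surplus from the $k=1,n$ contribution to absorb the slack of the pairs nearest to the endpoints. Making this estimate uniform in $n$ is the delicate part; the few smallest values of $n$ can be dispatched by direct arithmetic in the style of the $n=4$ case.
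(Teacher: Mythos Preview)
For $n=3$ and $n=4$ your argument is essentially the paper's: the paper also reads off $z_1<7/16<1/2<8/13<1/z_2$ from Proposition~\ref{propF3} for $n=3$, and for $n=4$ it also checks the sign of $F$ at $\cos(\pi/4)$ (your extra evaluation at $\sec(\pi/4)$ is harmless but redundant, since $z_1<\cos(\pi/4)$ already forces $1/z_1>\sec(\pi/4)$ by reciprocation).

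For $n\ge 5$ your target inequality $F(\sec(\pi/n))<0$ is correct and would indeed give claim~(3), but the paper reaches the same conclusion by a much shorter route that avoids your ``delicate part'' entirely. It simply invokes Lemma~\ref{lemZn} (quoted from Roberts' thesis), which gives $1/z_2<1-\tfrac{1}{n}$ for all $n\ge 5$, and combines this with the elementary inequality $1-\tfrac{1}{n}<\cos(\pi/n)$ to obtain $1/z_2<\cos(\pi/n)$; by the $a\leftrightarrow 1/a$ symmetry this also yields $\sec(\pi/n)<z_2$, so $\mathcal{C}_n$ misses both outer components of $\mathcal{A}_2(n)$. Your term-pairing estimate for $C_{21}(\sec(\pi/n))$ versus $S_n\sec(\pi/n)$ is left as a sketch, and there is no need to carry it out: the comparison with the intermediate value $1-1/n$ does all the work. (Incidentally, neither argument decides whether $z_1\lessgtr\cos(\pi/n)$ for $n\ge 5$; the paper explicitly leaves that to numerical evidence.)
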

\begin{proof}
For $n=3$, by Proposition~\ref{propF3} we have that
$$ z_1 < \dfrac7{16} < \cos\left(\dfrac{\pi}3\right) = \dfrac12 < \dfrac8{13} < \dfrac1{z_2}.$$
Therefore, using Theorem~\ref{theoA3},
$$ {\cal C}_3 \cap {\cal A}_2(3) = \left( \dfrac1{z_2},z_2\right)={\cal A}_{2C}(3).$$

For $n=4$, and by Proposition~\ref{propFn}, $F(a)<0$ for $a\in (z_1,z_2)$, and $F(a)>0$ otherwise. Using the expression of
$F(a)$ for $n=4$ (see the proof of Proposition~\ref{propFn} in the Appendix) we have that
$$F\left(\cos\left(\dfrac{\pi}4\right)\right)= \dfrac{\sqrt{2}}8+\dfrac12-\dfrac8{5\sqrt{5}} <0.$$
Therefore, $z_1 < \cos(\pi/4)$ and
$$ {\cal C}_4 \cap {\cal A}_2(4) = {\cal C}_4 \subset \left(z_1,\dfrac1{z_1}\right)={\cal A}_{2C}(4).$$

Finally for $n\geq 5$, by Theorem~\ref{theoAn} the central admissible interval is $(z_1,1/z_1)$ and
by Lemma~\ref{lemZn} we have that $\frac1{z_2} < 1-\frac1n < z_1$. Using the fact that $1-\frac1n < \cos(\pi/n)$
we obtain that
$$ {\cal C}_n \cap \left(  (0,\frac1{z_2})\cup (z_2\infty) \right) =\emptyset.$$
Therefore,
$$ {\cal C}_n \cap {\cal A}_n =
 \left\{ \begin{array}{c}
 {\cal C}_n \subset (z_1,\frac1{z_1}) \\
 \hbox{or}\\
 (z_1,\frac1{z_1})
 \end{array}\right.
$$
\end{proof}
For $n\geq 5$ we have not been able to proof that all the convex central configurations correspond exactly with the
central admissible interval, although the numerical explorations point in that direction.
In Table~\ref{table2} we show the values of central admissible intervals for some values of $n$.
In Figure~\ref{fig:ccc}
we show, for $n\in (5, 5000)$ the difference $\Delta_n=z_1-\cos(\pi/n)$ in logarithmic scale. We can see that
the difference is always positive, which implies that ${\cal C}_n \cap {\cal A}_2(n) =  (z_1,\frac1{z_1})={\cal A}_{2C}(n)$.

\begin{table}[!ht]
\centering
\begin{tabular}{|c|c|c|}
\hline
$n$ & Central admissible interval ${\cal A}_{2C}(n)$ & $\cos(\pi/n) $
\\
\hline
    3  &  (0.617364128382, 1.619789608802)  &  0.5 \\
\hline
    4  &    (0.697380509876,  1.433937406966) &  0.707106781187\\
\hline
    5    &  (0.822182869908, 1.216274428233) &    0.809016994375
\\
\hline
    6     &  (0.884321138125, 1.130810920250)   &  0.866025403784
\\
\hline
    7      &  (0.918990363772, 1.088150691695)      & 0.900968867902
\\
\hline
    8       &  (0.940138179122, 1.063673428234)  & 0.923879532511
\\
\hline
    9     &    (0.953949939513, 1.048273036749)   &   0.939692620786
\\
\hline
   10   &      (0.963459881269, 1.037925936971)  &   0.951056516295
   \\
   \hline
   $\vdots$ &  $\vdots$ &  $\vdots$
\\
\hline
100 & (0.999674025507, 1.00032608079) & 0.999506560366
   \\
   \hline
500 & (0.999986989988, 1.00001301018) & 0.999980339576
   \\
   \hline
1000 & (0.999996754292, 1.00000324572) & 0.999995075057
   \\
   \hline
5000 & (0.999999869916, 1.00000013008) & 0.999999802608
\\
\hline
\end{tabular}
\caption{The central admissible interval and $\cos(\pi/n)$ for different the values of $n$.}
\label{table2}
\end{table}

\begin{figure}[!ht]
\centering
\includegraphics{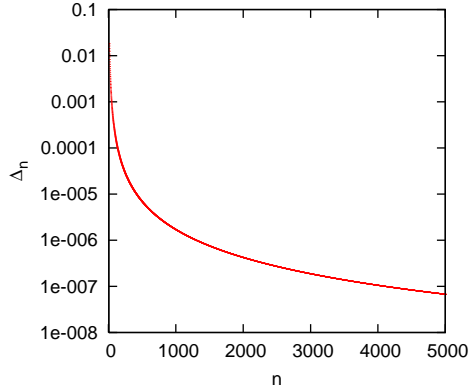}
\caption{For $5\geq n \geq 5000$, the difference $\Delta_n=z_1-\cos(\pi/n)$ (in logarithmic scale).}
\label{fig:ccc}
\end{figure}

Combining the results of Proposition~\ref{prop-ccc} and Theorems~\ref{theoCC3} and \ref{theoCCn}, we have the following
summary on convex central configurations.
\begin{enumerate}
\item For $n=3$, let $m^*$ be the bifurcation value given by Theorem~\ref{theoCC3}. Then, for any positive value $m$ there exists at least one convex twisted $(2,3)$--crown and
\begin{enumerate}
	\item for any $m\in(1,m^*)$, there exist exactly three different convex twisted $(2,3)$--crowns,
	\item for any $m\in (m^*,\infty)$ there exists only one convex twisted $(2,3)$--crown,
	\item for $m=1,m^*$ the number of different convex twisted $(2,3)$--crowns is exactly two.
\end{enumerate}
When $m=1$, the two convex twisted $(2,3)$--crowns where already numerically computed by Moeckel in an unpublished report.

\item For $n=4$, there exists a value for the mass ratio $m=\overline{m}$ such that
\begin{enumerate}
	\item for any $m\in[1,\overline{m}]$, there exists exactly one convex twisted $(2,4)$--crown,
	\item for any $m\in (\overline{m},\infty)$ there exists no convex twisted $(2,4)$--crown.
\end{enumerate}
The approximate value of $\overline{m}$ is $H(1/\cos(\pi/4))=16.05679941$.

\item For $n\geq 5$, if $H(1/\cos(\pi/n))>0$, then for $m \leq H(1/\cos(\pi/n))$, there exists at least one convex twisted $(2,n)$--crown.
On the contrary, if $H(1/\cos(\pi/n))<0$, then for all values of $m$ there exists at least one convex twisted $(2,n)$--crown.
\end{enumerate}

\section{Appendix}

We include here a detailed proof of some of the results presented in the paper. We have listed first all the lemmas, and then 
the propositions. In some cases, one can convince himself
of the certainty of the results simply by plotting the graphic of a function. Nevertheless,
we give an analytical proof of each one of them.


\begin{lemma}
\label{lemH3}
Let $H$ be the function given in (\ref{eq:k2}). For $n=3$ the following statements hold.
\begin{enumerate}
	\item the function $H$ has only two critical points for $a>1$, $a^*\in(1,z_2)$ and
$a^{**}\in(1/z_1,+\infty)$, where $z_1$ and $z_2$ are given in Proposition~\ref{propF3},
	\item the equation $H(a)=a$ has only one positive solution at $a=1$.
\end{enumerate}
\end{lemma}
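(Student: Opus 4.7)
The plan is to treat the two statements separately, using the explicit form of $F$ and $G$ for $n=3$ together with the symmetry $H(1/a)=1/H(a)$ from Lemma~\ref{lemma1}. For $n=3$ the angles $(2k-1)\pi/3$ have cosines $(1/2,-1,1/2)$ and $S_3=1/\sqrt 3$, so from (\ref{eq:functionsA})--(\ref{eq:k2}) one obtains
\[
F(a) = \frac{a}{\sqrt 3} - \frac{2a-1}{(a^2-a+1)^{3/2}} - \frac{1}{(a+1)^2}, \qquad
G(a) = \frac{1}{\sqrt 3} - \frac{a^3(2-a)}{(a^2-a+1)^{3/2}} - \frac{a^3}{(a+1)^2}.
\]

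I would tackle statement 2 first. The equation $H(a)=a$ is equivalent to $\Phi(a):=aF(a)-G(a)=0$. Pairing the $k=1,3$ terms in $a^2C_{12}(a)-C_{21}(a)$ yields the identity
\[
a^2C_{12}(a)-C_{21}(a)=(a-1)\left(\frac{1}{a+1}-\frac{1}{\sqrt{a^2-a+1}}\right),
\]
from which $\Phi(a)=S_3(a^2-1)+a(a^2C_{12}(a)-C_{21}(a))=(a-1)\Psi(a)$ with
\[
\Psi(a)=\frac{a+1}{\sqrt 3}-\frac{a}{\sqrt{a^2-a+1}}+\frac{a}{a+1}.
\]
Since $a/(a+1)>0$ for $a>0$, it suffices to prove $\frac{a+1}{\sqrt 3}\geq\frac{a}{\sqrt{a^2-a+1}}$, i.e.\ $(a+1)^2(a^2-a+1)\geq 3a^2$, i.e.\ $a^4+a^3-3a^2+a+1\geq 0$. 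I would verify this by factoring the quartic as
\[
\left(a^2+\frac{1+\sqrt{21}}{2}\,a+1\right)\left(a^2+\frac{1-\sqrt{21}}{2}\,a+1\right);
\]
the first factor has two negative real roots (sum $<0$, product $=1$), the second has negative discriminant $(3-\sqrt{21})/2$, so the product is strictly positive on all of $\R$. Hence $\Psi>0$ and $a=1$ is the unique positive solution of $H(a)=a$.

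For statement 1 my approach combines boundary analysis with a uniqueness argument on the critical-point equation $2FG+a(F'G-FG')=0$. Existence of a critical point in $(1,z_2)$ follows from $H(1)=1$, $H(z_2)=0$, and a direct rationalization showing $H'(1)=(57-32\sqrt 3)/59>0$; existence in $(1/z_1,+\infty)$ follows from $\lim_{a\to(1/z_1)^+}H(a)=\lim_{a\to+\infty}H(a)=+\infty$. Differentiating the symmetry relation $H(1/a)H(a)=1$ shows that critical points come in reciprocal pairs $\{a_0,1/a_0\}$, which halves the counting work. For uniqueness I would rationalize the critical-point equation: multiplying by $(a^2-a+1)^3(a+1)^4$ separates it into a part rational in $a$ and a part proportional to $1/\sqrt{a^2-a+1}$; moving the irrational part to one side and squaring produces a polynomial $P(a)$, and I would isolate its positive zeros in each subinterval by Sturm's theorem (after factoring out any obvious root such as $a-1$).

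The hard part will be the uniqueness in statement 1: unlike the tidy factorization $\Phi=(a-1)\Psi$ available in statement 2, the rationalized critical-point polynomial has large degree and acquires extraneous roots from the squaring, so isolating exactly one honest root in each of $(1,z_2)$ and $(1/z_1,+\infty)$ will be the most delicate step. A possible shortcut is to work directly with the logarithmic derivative $\frac{2}{a}+\frac{F'(a)}{F(a)}-\frac{G'(a)}{G(a)}$ and to show that it is strictly monotone on each of the two subintervals, which would give uniqueness immediately but seems to require separate casework for $(1,z_2)$ and $(1/z_1,+\infty)$.
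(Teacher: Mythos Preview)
Your treatment of statement~2 is correct and in fact crisper than the paper's. Both arguments reduce $H(a)=a$ to $(a-1)\Psi(a)=0$ with the same $\Psi$, but where the paper finishes by saying that ``the study of the derivative of the functions of both sides leads quickly to the conclusion,'' you give a self-contained proof of $\Psi>0$ by reducing to the palindromic quartic $a^4+a^3-3a^2+a+1\geq 0$ and factoring it explicitly. That is a genuine improvement.

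The gap is in statement~1. You establish existence in each of $(1,z_2)$ and $(1/z_1,+\infty)$ correctly, but for uniqueness you only outline a program (rationalize, square, then invoke Sturm's theorem, or alternatively prove monotonicity of the logarithmic derivative) without carrying it out. This is the bulk of the lemma, and it does not come for free: after clearing radicals the resulting polynomial has degree in the twenties and the squaring introduces spurious roots, so ``Sturm's theorem'' here is really a nontrivial computer-algebra computation, not a one-line appeal. The paper avoids squaring altogether. It writes the numerator of $H'(a)$ as a constant times $2\sqrt{a^2-a+1}\,q(a)-(a+1)p(a)$ for explicit degree-$10$ polynomials $p,q$ (both palindromic, $a^{10}p(1/a)=p(a)$ and likewise for $q$), and then studies the intersection of $g_1(a)=\tfrac{2\sqrt{a^2-a+1}}{a+1}$ with $g_2(a)=p(a)/q(a)$. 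Using Descartes' rule on $q(a)$, $p(1+a)$, and on the numerator of $g_2'$ (which factors as $-(3+2\sqrt3)(a+1)(a-1)r(a)$ with $r$ of degree~$16$), together with Bolzano localizations, they pin down that $g_2$ has, for $a\geq 1$, a single pole $u\in(46/16,47/16)$, a single zero $v\in(47/16,3)$, and a single critical point $w\in(5,6)$; combining this with the elementary monotonicity and bounds of $g_1$ yields exactly one intersection in $(1,u)$ and one in $(v,w)$, hence $a^*\in(1,z_2)$ and $a^{**}\in(1/z_1,\infty)$. If you want a proof rather than a plan, you will need either to execute the Sturm computation in full (and then discard the extraneous roots introduced by squaring) or to replace it by a structural analysis of $p$, $q$ and $g_2'$ along the paper's lines.
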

\begin{proof}
Recall that $H(a)=a^2 \frac{F(a)}{G(a)}$, and for $n=3$, $F$ is given by (\ref{eq:funFn3}) and
$$
G(a) = \dfrac{\sqrt{3}}3  - a^3 \left( \dfrac{2-a}{(1+a^2-a)^{3/2}}+\dfrac1{(1+a)^2} \right).
$$

We start proving the first statement. The numerator of $H'(a)$ can be written as
$$ -3\sqrt{3}a\sqrt{1+a^2-a} \left(2\sqrt{1+a^2-a} \;q(a)-(1+a)p(a)\right),$$
where $p$ and $q$ are the following polynomials of degree 10:
\begin{eqnarray*}
p(a) &=& 4\,{a}^{10}-{a}^{9}-23\,{a}^{8}- \left(2\,\sqrt {3}+23 \right) {a}^{7}
- \left(17\,\sqrt {3}+1 \right) {a}^{6}\\
& & + \left( 42\,\sqrt {3}+8 \right) {a}^{5}- \left(17\,\sqrt {3}+1 \right) {a}^{4}
- \left(2\,\sqrt {3}+23 \right) {a}^{3}-23\,{a}^{2}-a+4,
\\
q(a) &=& 2\,{a}^{10}- \left(2+\sqrt {3} \right) {a}^{9}- \left(2\,\sqrt {3}-2 \right) {a}^{8}
- \left(2\,\sqrt {3}-2 \right) {a}^{7}- \left(12\,\sqrt {3}+2 \right) {a}^{6} \\
& &- \left(13\,\sqrt {3}-4 \right) {a}^{5}- \left(12\,\sqrt {3}+2 \right) {a}^{4}
- \left( 2\,\sqrt {3}-2 \right) {a}^{3}- \left(2\,\sqrt {3}-2 \right) {a}^{2}
\\
& &- \left( 2+\sqrt {3} \right) a +2.
\end{eqnarray*}

The equation $H'(a)=0$ is equivalent to solve
$$g_1(a)= \dfrac{2\sqrt{1+a^2-a}}{1+a} = \dfrac{p(a)}{q(a)} = g_2(a).$$
Recall that we look for the solutions of $g_1(a)=g_2(a)$ in the interval $[1,\infty)$.
On one hand, the function $g_1\in{\cal C}^1$, is positive, $g_1(1)=1$, increases in $[1,\infty)$, and
$\lim_{a\to\infty}g_1(a)=2$.
On the other hand, $g_2(1)={\frac {76+3728\sqrt {3}}{6563}}\, <1$ and $\lim_{a\to\infty}g_2(a)=2$. Furthermore,
we will prove that the function $g_2$, for $a\geq 1$, has only one pole $u\in(46/16,47/16)$, a zero $v\in (47/16,3)$,
only one critical point $w\in (5,6)$ which is a local maximum, and the function increases in $(1,u)\cup(u,w)$ and decreases $(w,+\infty)$.
From all of the above properties of the functions $g_1$ and $g_2$, there exist only two values $a^*\in(1,u)$ and $a^{**}\in(v,w)$ at which $H'(a)=0$. See Figure~\ref{fig:prooftheo1}.
Finally, using a Bolzano's argument and the bounds of $z_1$ and $z_2$ given in Proposition~\ref{propF3}
$a^* < 25/16<z_2$ and $a^{**} > 3 > 1/z_1$.
\begin{figure}[!ht]
\centering
\includegraphics[scale=0.3]{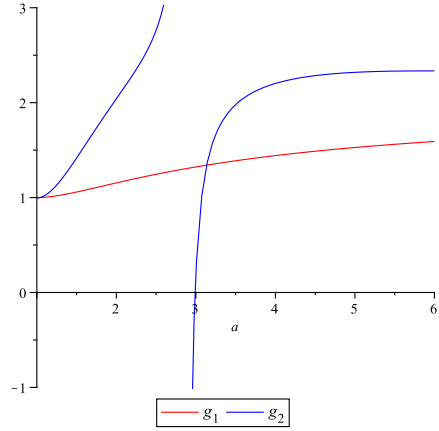}
\label{fig:prooftheo1}
\caption{Functions $g_1(a)$ and $g_2(a)$ used in the proof of Lemma~\ref{lemH3}}
\end{figure}

We have to prove the behavior of the function $g_2$ stated above.
Notice that the polynomials satisfy that $a^{10}p(1/a)= p(a)$ and $a^{10}q(1/a)= q(a)$. That means that if $a$ is a zero of one of the polynomials, so it is $1/a$. Applying this property and Descarte's rule, $q(a)$ has only one zero at $a=u\geq 1$, and, by Bolzano's theorem, $u\in(46/16,47/16)$.
To determine the number of zeros of $p(a)$, we consider the polynomial $p(1+a)$. Applying again Descarte's rule, $p(1+a)$ has only one positive zero, and therefore, $p(a)$ has only one zero at $a=v\geq 1$, which is located in
$(47/16,3)$. Looking at the sign of the polynomials $p(a)$ and $q(a)$, $g_2(a)>0$ for $a\in[1,u)\cup (v,+\infty)$ and $g_2(a)<0$ for $a\in(u,v)$.

It remains to see that $g_2$ has only one critical point in $(1,\infty)$, $a=w>u$. We need to solve
$$p'(a)q(a)-p(a)q'(a) = -(3+2\sqrt{3})(a+1)(a-1)r(a)=0,$$
where $r(a)$ is a polynomial of degree 16 that can be written as
$$ r(a) = r_1(a) -r_2(a),$$
\begin{eqnarray*}
r_1(a) &=& 2(a^{16}+1) +(220-106\sqrt{3})a^3(a^{10}+1)+(1507-744\sqrt{3})a^4(a^8+1) \\
r_2(a) &=& (88\sqrt{3}-140)a(a^{14}+1)+(109\sqrt{3}-182)a^2(a^{12}+1)+(1276-574\sqrt{3})a^5(a^6+1)
\\
& & + (264\sqrt{3}-356)a^6(a^4+1)+(3868-1814\sqrt{3})a^7(a^2+1)+(560\sqrt{3}-351)a^8
\end{eqnarray*}
By Descarte's rule, $r(a+2)$ has only one positive solution, so that $r(a)$ has only one zero greater than 2,
which is located $a=w\in(5,6)$.  The polynomials $r_1$ and $r_2$ are positive and increasing for $a>0$, and
$$ r_1(1) < r_2(1), \qquad r_1(2) < r_2(2),$$
Therefore, $r(a) <0$, for $a\in(1,2)$, so that it has no zeros in that interval, and $w$ is the only critical point of $g_2$ for $a>1$. This concludes the proof of the first item.

The second statement claims that the only positive solution of $H(a)=a$ is $a=1$. The equation can be written as
$aF(a)=G(a)$, and after a simplification, it can be written as
$$ \dfrac{\sqrt{3}}3 (a^2-1) = \dfrac{a(a-1)(a^2-a+1)}{(a^2-a+1)^{3/2}}+\dfrac{a(1-a^2)}{(1+a)^2}.$$
Clearly the equation has $a=1$ as a solution. Simplifying by the term $a-1$ at both sides we obtain
$$ \dfrac{\sqrt{3}}3 (a+1) = \dfrac{a}{(a^2-a+1)^{1/2}}-\dfrac{a}{(1+a)},$$
or equivalently
$$ \dfrac{\sqrt{3}}3 (a+1) + \dfrac{a}{(1+a)} = \dfrac{a}{(a^2-a+1)^{1/2}}.$$
The study of the derivative of the functions of both sides of the equations leads quickly to the conclusion that the equation has no positive solutions. Therefore, $a=1$ is the only positive solution of $H(a)=0$.
This concludes the proof.
\end{proof}

\begin{lemma}
\label{lemFn}
Let $F$ be the function defined in (\ref{eq:k2}) for $n\geq 4$. Then
\begin{enumerate}
	\item $F(1)=S_n-C_{21}(1)<0$;
	\item $\displaystyle \lim_{a\to 0^+} \frac{F(a)}{a} = S_n+\frac{n}2$;
    \item $\displaystyle \lim_{a\to+\infty}F(a) = +\infty.$
\end{enumerate}
\end{lemma}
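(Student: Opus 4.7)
I would treat the three parts separately, saving the hardest for last. Parts (2) and (3) are elementary. For (3), each summand of $C_{21}(a)$ has numerator $\sim a$ and denominator $\sim a^3$ as $a \to \infty$, so $C_{21}(a) = O(a^{-2}) \to 0$ while $S_n a \to +\infty$, giving $F(a) \to +\infty$. For (2), I would write $F(a)/a = S_n - C_{21}(a)/a$. Since $C_{21}(0) = -\sum_{k=1}^n \cos((2k-1)\pi/n) = 0$ by the standard roots-of-unity identity, the limit reduces to $C_{21}'(0)$. A direct differentiation of the $k$-th summand at $a = 0$ gives $1 - 3\cos^2((2k-1)\pi/n)$, and summing, together with the identity $\sum_{k=1}^n \cos^2((2k-1)\pi/n) = n/2$ (valid for $n \ge 3$, again from roots of unity), yields $C_{21}'(0) = -n/2$ and therefore $\lim_{a\to 0^+} F(a)/a = S_n + n/2$.

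For (1), I would first simplify via $1-\cos\theta_k = 2\sin^2(\theta_k/2)$ to obtain
\[
C_{21}(1) = \frac14 \sum_{k=1}^n \frac{1}{\sin\bigl((2k-1)\pi/(2n)\bigr)},
\]
so that the desired $F(1) < 0$ becomes the trigonometric inequality
\[
\sum_{k=1}^{n-1} \frac{1}{\sin(k\pi/n)} < \sum_{k=1}^n \frac{1}{\sin\bigl((2k-1)\pi/(2n)\bigr)}.
\]
The key observation is that, setting $x_j := j\pi/(2n)$ and $f(x) := 1/\sin x$, both sides evaluate $f$ on the regular grid $\{x_1,\ldots,x_{2n-1}\}\subset(0,\pi)$, the left-hand sum over even indices and the right-hand sum over odd indices.

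Since $f$ is strictly convex on $(0,\pi)$, the midpoint inequality $f(x_{2k-1}) + f(x_{2k+1}) \ge 2 f(x_{2k})$ holds for $k = 1, \ldots, n-1$. Summing these $n-1$ inequalities, each interior odd index $j \in \{3,5,\ldots,2n-3\}$ appears twice while $x_1$ and $x_{2n-1}$ appear once; using the symmetry $f(x_1) = f(x_{2n-1})$, the left side collapses to $2B - 2f(x_1)$ and the right side is $2A$, where $A$ and $B$ denote the left-hand and right-hand sums of the displayed inequality. Hence $B - A \ge f(x_1) > 0$, which gives (1).

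The main obstacle is part (1): without the structural observation that both sums are samples of the convex function $1/\sin$ on a regular arithmetic progression, the trigonometric inequality looks opaque. Once the pairing is identified, midpoint convexity delivers the bound essentially for free. Parts (2) and (3) are comparatively mechanical.
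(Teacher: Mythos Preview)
Your argument is correct in all three parts. The computations for (2) and (3) are clean, and the convexity trick for (1) is a genuinely nice way to establish the trigonometric inequality; the strict convexity of $\csc$ on $(0,\pi)$ makes the midpoint pairing work exactly as you describe, and the symmetry $f(x_1)=f(x_{2n-1})$ closes the telescoping.

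Your route is quite different from the paper's. There, parts (1) and (2) are not proved at all but referred to Lemma~1.2.4 in Chapter~1 of Roberts' thesis, and part (3) is obtained via the structural identity $F(a)=a\,G(1/a)$ of Lemma~\ref{lemma1} together with $G(0)=S_n>0$. So the paper leverages the symmetry of the problem plus an external reference, whereas you give a fully self-contained treatment: direct asymptotics for (3), a Taylor expansion at $0$ for (2), and the convexity pairing for (1). The advantage of your approach is that the reader does not need access to the thesis, and the convexity argument in fact proves $F(1)<0$ for every $n\ge 2$, not only $n\ge 4$. The paper's route for (3), on the other hand, makes the connection to the $a\leftrightarrow 1/a$ symmetry explicit, which is conceptually useful elsewhere in the paper.
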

\begin{proof}
The first two results correspond to Lemma 1.2.4 of the first Chapter of \cite{PhDRoberts}. 
Notice that the notation used by the author is different from the one used here. 
The last result is obtained from Lemma~\ref{lemma1} and the fact that $G(0)=S_n>0$.
\end{proof}

\begin{lemma}
\label{lemZn}
Let be $n\geq 5$ and suppose that $z_1$ and $z_2$ are the only positive solutions of $F(a)=0$. Then
$$0 < 1-\frac1{n} < z_1< 1 < \dfrac1{z_1} < \frac{n}{n-1} < z_2. $$
\end{lemma}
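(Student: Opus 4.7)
The plan is to reduce the chain of five inequalities to just two sign assertions on $F$ and $G$, and then establish each by an explicit estimate on the defining finite sums. The middle inequalities $z_1<1<1/z_1$ are immediate from Proposition~\ref{propFn} (which gives $z_1<1<z_2$). By Lemma~\ref{lemma1}(1), the two zeros of $G$ are $1/z_1$ and $1/z_2$ with $G<0$ precisely on $(1/z_2,1/z_1)$. Using the sign pattern of $F$ from Proposition~\ref{propFn}, the inequality $(n-1)/n<z_1$ is equivalent to $F((n-1)/n)>0$ (the alternative $(n-1)/n>z_2$ being ruled out by $(n-1)/n<1<z_2$). Similarly, via the symmetry $F(a)=aG(1/a)$, the inequality $n/(n-1)<z_2$ is equivalent to $G((n-1)/n)<0$. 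The remaining inequality $1/z_1 < n/(n-1)$ is just the reciprocal of $(n-1)/n<z_1$. Thus it suffices to prove
\begin{equation*}
F\!\left(\tfrac{n-1}{n}\right) > 0 \quad\text{and}\quad G\!\left(\tfrac{n-1}{n}\right) < 0 \qquad\text{for every } n\geq 5.
\end{equation*}

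To prove $F((n-1)/n)>0$, I would expand $F(a) = S_n a - C_{21}(a)$ with $a=(n-1)/n$ and $\theta_k = (2k-1)\pi/n$. The symmetry $\theta_{n-k+1}=2\pi-\theta_k$ gives $d_{n-k+1}(a)=d_k(a)$ and $\cos\theta_{n-k+1}=\cos\theta_k$, so $C_{21}(a)$ collapses into a sum of $\lfloor n/2\rfloor$ positive terms. I would pair these against the corresponding pairs in $S_n = \tfrac14\sum_{k=1}^{n-1}\csc(k\pi/n)$ (using $\csc(k\pi/n)=\csc((n-k)\pi/n)$) and establish the inequality $C_{21}((n-1)/n) < S_n\cdot(n-1)/n$ term by term. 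For $G((n-1)/n)<0$, the symmetry $F(a)=aG(1/a)$ converts the claim into $F(n/(n-1))<0$, i.e.\ $C_{21}(n/(n-1)) > S_n\cdot n/(n-1)$; the same pairing strategy applies, now at $a=n/(n-1)>1$, with the inequality reversed.

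The main obstacle is obtaining bounds that are simultaneously tight enough and uniform in $n$: for $n=5$ the margin $z_1-(n-1)/n$ is already quite small. The dominant contribution to $C_{21}((n-1)/n)$ comes from the pair $k=1$ and $k=n$, where both $a\to 1$ and $\cos\theta_k\to 1$ conspire to make $d_k(a)$ small; these singular terms require careful asymptotic control. The bulk of the remaining terms can be estimated by comparison with the integral $\int_0^{2\pi}(a-\cos\theta)\,d\theta/(1+a^2-2a\cos\theta)^{3/2}$, which admits a closed form. A handful of small cases $n=5,6,\ldots$ can then be checked by direct (interval-arithmetic) numerical evaluation to close any remaining gap between the asymptotic regime and the specified lower bound $n\geq 5$.
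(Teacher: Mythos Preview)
The paper itself does not supply an argument for this lemma; it simply defers to Lemma~1.2.5 in Roberts' thesis~\cite{PhDRoberts}. So there is no in-paper proof to compare your proposal against, only the observation that your reduction is almost certainly the same one Roberts carries out: once the sign pattern of $F$ from Proposition~\ref{propFn} is known, the entire chain of inequalities is equivalent to the two sign checks $F\!\left(\tfrac{n-1}{n}\right)>0$ and $F\!\left(\tfrac{n}{n-1}\right)<0$ (your restatement via $G$ is equivalent through Lemma~\ref{lemma1}). That reduction is correct and is the natural way to attack the statement.

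Where your proposal becomes shaky is in the execution sketch. First, the assertion that ``$C_{21}(a)$ collapses into a sum of $\lfloor n/2\rfloor$ positive terms'' is wrong at $a=(n-1)/n$: the numerator $a-\cos\theta_k$ is \emph{negative} for the extremal indices $k=1,n$ (indeed $a-\cos(\pi/n)\approx -1/n+\pi^2/(2n^2)<0$ for $n\ge 5$), so those ``dangerous'' near-singular terms actually help, not hurt, the inequality $F>0$. This changes the character of the estimate you have to make. Second, the ``term by term'' pairing of $C_{21}$ against $S_n$ is not well defined as stated, since the two sums are indexed differently ($n$ terms at the half-angles $(2k-1)\pi/n$ versus $n-1$ terms at $k\pi/n$); some regrouping or a global comparison is needed rather than a termwise one. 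Third, the integral $\int_0^{2\pi}(a-\cos\theta)(1+a^2-2a\cos\theta)^{-3/2}\,d\theta$ does not have an elementary closed form (it is an elliptic integral), so the ``closed form'' route will not give the clean bound you suggest. None of these points invalidates the overall strategy---the two sign checks are the right target and can be established by careful estimation (as Roberts does)---but the specific mechanisms you outline would need to be reworked.
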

\begin{proof}
  This result correspond to Lemma 1.2.5 of the first Chapter of \cite{PhDRoberts}.
\end{proof}

\begin{lemma}
\label{lemteoCCn}
Let be $n\geq 4$  and
$$g_1(a)=S_n \dfrac{a^2+a+1}{a^2(a+1)}, \qquad
g_2(a)= \sum_{k=1}^n \dfrac{\cos \theta_k}{d_k^{3/2}},$$
where $d_k=1+a^2-2a\cos\theta_k$, and $\theta_k=(2k-1)\pi/n$ for $k=1,\ldots,n$.
They satisfy the following properties:
\begin{enumerate}
\item $g_i(1/a)=a^3g_i(a)$, $i=1,2$;
\item $\exists$ $A>1$ such that for all $a>A$, $g_2(a) < g_1(a)$;
\item if $n$ is even, then $g_2(1) > g_1(1)$;
\item if $n$ is even, then $g_1$ and $g_2$ are decreasing functions in $(1,\infty)$.
\end{enumerate}
\end{lemma}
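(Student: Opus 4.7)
The plan is to verify the four properties separately and in order of increasing difficulty. Property 1 is a direct manipulation: for $g_1$, substituting $1/a$ in the rational expression and clearing denominators gives $g_1(1/a)=a\,S_n(a^2+a+1)/(a+1)=a^3 g_1(a)$; for $g_2$, the key identity is $d_k(1/a)=d_k(a)/a^2$, from which $d_k(1/a)^{3/2}=d_k(a)^{3/2}/a^3$ and the symmetry is immediate. Property 2 follows from an asymptotic expansion as $a\to\infty$. Writing $d_k(a)^{-3/2}=a^{-3}+3a^{-4}\cos\theta_k+O(a^{-5})$ and summing, the leading $a^{-3}$ term vanishes because $\sum_{k=1}^n\cos((2k-1)\pi/n)=0$ by symmetry, so $g_2(a)\sim 3n/(2a^4)$ whereas $g_1(a)\sim S_n/a$; thus $g_1(a)>g_2(a)$ for every $a$ beyond some explicit $A>1$.

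For Property 4 I would split the two functions. Differentiating $g_1$ directly, the numerator of $g_1'(a)$ factors as $-S_n\,a(a^3+2a^2+4a+2)$, which is negative for every $a>0$ and every $n$. For $g_2$ the argument exploits the antipodal pairing peculiar to $n$ even: $\theta_{k+n/2}=\theta_k+\pi$, so $x_{k+n/2}=-x_k$ with $x_k=\cos\theta_k$, and setting
\[
h(x)=\frac{a-x}{(1+a^2-2ax)^{5/2}},
\]
the grouping of pairs yields
\[
g_2'(a)=-3\sum_{k=1}^{n/2}x_k\bigl[h(x_k)-h(-x_k)\bigr].
\]
A direct calculation gives $h'(x)=(4a^2-3ax-1)/(1+a^2-2ax)^{7/2}$, and on $|x|\leq 1$ its numerator satisfies $4a^2-3ax-1\geq(4a+1)(a-1)>0$ whenever $a>1$. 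Therefore $h$ is strictly increasing on $[-1,1]$, each summand $x_k[h(x_k)-h(-x_k)]$ is nonnegative with strict positivity for every $k$ with $x_k\neq 0$, and hence $g_2'(a)<0$ on $(1,\infty)$.

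Property 3 is the main obstacle. Using $d_k(1)=4\sin^2(\theta_k/2)$ together with $\cos\theta_k=1-2\sin^2(\theta_k/2)$, I would rewrite $g_2(1)=T_n-C_{21}(1)$ with $T_n:=\tfrac{1}{8}\sum_{k=1}^n\csc^3((2k-1)\pi/(2n))$, so that the desired inequality becomes $T_n>C_{21}(1)+\tfrac{3}{2}S_n$. Since Lemma~\ref{lemFn} provides $S_n<C_{21}(1)$ for $n\geq 4$, it would suffice to prove the purely trigonometric estimate $T_n>\tfrac{5}{2}C_{21}(1)$, i.e.\ $\sum_k\csc^3\phi_k>5\sum_k\csc\phi_k$ with $\phi_k=(2k-1)\pi/(2n)$. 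The plan for this last step is to isolate the dominant pair $k=1,n$ (for which $\csc\phi_k=\csc(\pi/(2n))$ is of order $n$ and hence $\csc^3\phi_k$ is of order $n^3$), bound the remaining $n-2$ terms against $C_{21}(1)=O(\log n)$, and settle the smallest cases $n=4,6$ by direct computation. This is the genuinely delicate step: the two sides of the inequality range over different arithmetic progressions of angles (odd versus even multiples of $\pi/(2n)$), so no term-by-term comparison is available and one must rely on a dominant-term argument.
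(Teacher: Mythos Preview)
Your arguments for items 1, 2, and 4 are correct. For item 4 your treatment of $g_2$ is in fact cleaner than the paper's: both proofs exploit the antipodal pairing $\theta_{k+n/2}=\theta_k+\pi$ available when $n$ is even, but where the paper reduces to the polynomial inequality $(a-b)^2(1+a^2+2ab)^5>(a+b)^2(1+a^2-2ab)^5$ and simply asserts it, you compute $h'(x)=(4a^2-3ax-1)(1+a^2-2ax)^{-7/2}$ and observe $4a^2-3ax-1\ge (4a+1)(a-1)>0$ on $|x|\le 1$, which settles the monotonicity of $h$ and hence the sign of each paired summand in one line.

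The gap is item 3. Your reformulation $g_2(1)=T_n-C_{21}(1)$ and the reduction to $\sum_k\csc^3\phi_k>5\sum_k\csc\phi_k$ are correct, but from that point on you only give a plan (``isolate the dominant pair, bound the rest, check small cases''), not a proof. Two specific issues: first, passing from the target inequality $T_n>C_{21}(1)+\tfrac32 S_n$ to the stronger $T_n>\tfrac52 C_{21}(1)$ throws away margin, and at $n=4$ the stronger inequality is only barely true ($\sum\csc^3\phi_k\approx 38.2$ versus $5\sum\csc\phi_k\approx 37.0$), so any crude bounding of the non-dominant terms will fail there; second, the phrase ``bound the remaining $n-2$ terms against $C_{21}(1)=O(\log n)$'' is misleading, since $C_{21}(1)=\tfrac14\sum_k\csc\phi_k$ grows like $n\log n$, not $\log n$, and in any case you have given no explicit estimate. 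The paper avoids these problems by bounding $g_1(1)$ and $g_2(1)$ directly and separately: it uses $g_1(1)<\tfrac38\bigl(1+(n-2)/\sin(\pi/n)\bigr)$ and, after keeping only the $k=1$ term among the positive contributions to $g_2(1)$ and crudely bounding the negative ones, obtains $g_2(1)>\tfrac14\bigl(\cos(\pi/n)/\sin^3(\pi/(2n))-n\sqrt2/2\bigr)$; substituting $x=\pi/n$ and using $\cos x\ge 1-x/2$, $\sin(x/2)\le x/2$ reduces everything to a one-variable inequality on $(0,\pi/4]$ that is verified by monotonicity. To complete your proof you would either have to carry out an analogous explicit estimate or replace the weakened target $\tfrac52 C_{21}(1)$ by a sharper one that survives the small-$n$ cases.
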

\begin{proof}
  The first item is a straightforward computation. The second one comes from the limit
  $$\lim_{a\to\infty}\dfrac{g_2(a)}{g_1(a)} = \lim_{\varepsilon \to 0} \dfrac{g_2(1/\varepsilon)}{g_1(1/\varepsilon)}
  = \lim_{\varepsilon \to 0} \dfrac{\varepsilon^2(1+\varepsilon) g_2(\varepsilon)}{S_n(\varepsilon^2+\varepsilon+1)} =0.$$

Now let us consider $n=2m$ with $m\geq 2$.
Then
$$ g_2(a)=2\sum_{k=1}^{[m/2]} \dfrac{\cos \theta_k}{(1+a^2-2a\cos\theta_k)^{3/2}}-
\dfrac{\cos \theta_k}{(1+a^2+2a\cos\theta_k)^{3/2}},$$
where $[\cdot]$ denotes the floor function, and
$$ g_2'(a)=-6\sum_{k=1}^{[m/2]} \cos \theta_k\left(\dfrac{a-\cos \theta_k}{(1+a^2-2a\cos\theta_k)^{5/2}}-
\dfrac{a+\cos \theta_k}{(1+a^2+2a\cos\theta_k)^{5/2}}\right).$$
Notice that $\cos\theta_k >0$ for $k=1,\ldots,[m/2]$. Thus, to prove the forth statement we only need to see that
$$ \dfrac{a-\cos \theta_k}{(1+a^2-2a\cos\theta_k)^{5/2}}-
\dfrac{a+\cos \theta_k}{(1+a^2+2a\cos\theta_k)^{5/2}} >0,$$
which is equivalent to see that
$$ (a-\cos \theta_k)^2(1+a^2+2a\cos\theta_k)^5 - (a+\cos \theta_k)^2(1+a^2-2a\cos\theta_k)^5 >0.$$
The property follows from the fact that the polynomial
$$q=(a-b)^2(1+a^2+2ab)^5-(a+b)^2(1+a^2-2ab)^5 >0$$
for all $a>1$ and $0<b\leq 1$.

Finally,
\begin{eqnarray*}
  g_1(1) &=& \dfrac38 \sum_{k=1}^{n-1} \dfrac1{\sin(k\pi/n)} =
  \dfrac38 \left( 1 + 2 \sum_{k=1}^{m-1} \dfrac1{\sin(k\pi/n)}
  \right)
   \\
   &<&  \dfrac38 \left( 1+ \dfrac{n-2}{\sin(\pi/n)} \right)
\\
  g_2(1) &=& \dfrac28 \sum_{k=1}^m \dfrac{\cos\theta_k}{\sin^3(\theta_k/2)} =
  \dfrac28 \left(
  \sum_{k=1}^{[m/2]} \dfrac{\cos\theta_k}{\sin^3(\theta_k/2)}
  + \sum_{k=[m/2]+1}^m  \dfrac{\cos\theta_k}{\sin^3(\theta_k/2)} \right)
  \\
   & > &  \dfrac28 \left(  \dfrac{\cos(\pi/n)}{\sin^3(\pi/(2n))} +
   \sum_{k=[m/2]+1}^m  \dfrac{\cos\theta_k}{(\sqrt{2}/2)^3}
   \right)
      >  \dfrac28 \left(  \dfrac{\cos(\pi/n)}{\sin^3(\pi/2n)} - n\frac{\sqrt{2}}2 \right)
   \\
   & \geq &  \dfrac28 \left( \dfrac{1-\pi/(2n)}{(\pi/(2n))^3}-n\frac{\sqrt{2}}2
   \right) = \dfrac28 \left( \left(\dfrac{2n}{\pi}\right)^3 - \left(\dfrac{2n}{\pi}\right)^2-n\frac{\sqrt{2}}2
    \right)
\end{eqnarray*}
where we have used that $n\geq 4$, and $\cos x \geq 1-x/2$ and $\sin(x/2) \leq x/2$ for $x \leq \pi/4$.
Then,
$$8(g_2(1)-g_1(1)) > \dfrac{16}{\pi^3}n^3 - \dfrac{8}{\pi^2}n^2-n\sqrt{2}-3-3\dfrac{n-2}{\sin(\pi/n)}.$$
Taking $x=\pi/n$, it is enough to see that
$$ 16-8x-\pi\sqrt{2}x^2-3x^3 \leq 3x^2 \dfrac{\pi-2x}{\sin x}$$
for $x\in(0,\pi/4]$. The polynomial is decreasing, while the function on the right is increasing, and the inequality holds for $x=\pi/4$.
This concludes the proof.
\end{proof}


\begin{propn}[\ref{propF3}]
Let $F$ be the function defined in (\ref{eq:k2}). Then, for $n=3$, the following statements hold.
\begin{enumerate}
  \item The equation $F(a)=0$ has exactly two positive solutions, denoted by $z_1$ and $z_2$;
  \item $3/8<z_1<7/16$ and $25/16<z_2<13/8$;
  \item $F(a)<0$ for $a\in (z_1,z_2)$ and $F(a)>0$ for $a\in(0,z_1)\cup (z_2,\infty)$.
\end{enumerate}
\end{propn}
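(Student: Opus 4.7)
The plan is to use the closed form of $F$ for $n=3$. With $S_{3}=\sqrt{3}/3$ and the cosines $\cos(\pi/3)=\cos(5\pi/3)=1/2$, $\cos\pi=-1$, the sum defining $C_{21}$ collapses and gives
$$F(a)=\frac{\sqrt{3}}{3}\,a-\frac{2a-1}{(1+a^{2}-a)^{3/2}}-\frac{1}{(1+a)^{2}}.$$

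For item 2 my plan is direct: evaluate $F$ at each of the four rational endpoints $3/8,\,7/16,\,25/16,\,13/8$. Each value is a rational number plus a rational multiple of $\sqrt{3}$, and using the rigorous enclosure $1.732<\sqrt{3}<1.733$ one verifies $F(3/8)>0$, $F(7/16)<0$, $F(25/16)<0$, and $F(13/8)>0$. Bolzano's theorem then supplies zeros $z_{1}\in(3/8,7/16)$ and $z_{2}\in(25/16,13/8)$.

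The main obstacle is item 1, the uniqueness of these two zeros. My plan is to split the half-line at $a_{0}=(2+\sqrt{6})/4$, the positive root of $8a^{2}-8a-1$. Differentiation gives
$$F'(a)=\frac{\sqrt{3}}{3}+\frac{8a^{2}-8a-1}{2(1+a^{2}-a)^{5/2}}+\frac{2}{(1+a)^{3}},$$
so on $[a_{0},\infty)$ every summand is non-negative and $F$ is strictly increasing. Combined with $F(25/16)<0$ and $F(13/8)>0$, this yields exactly one zero of $F$ in $[a_{0},\infty)$, which must be $z_{2}$. On the bounded range $(0,a_{0})$ I would clear the radical and reduce $F(a)=0$ to a polynomial equation: multiply by the positive factor $3(1+a)^{2}(1+a^{2}-a)^{3/2}$ to obtain
$$\bigl[\sqrt{3}\,a(1+a)^{2}-3\bigr](1+a^{2}-a)^{3/2}=3(2a-1)(1+a)^{2},$$
then square once to eliminate the $3/2$-power and a second time to eliminate the surviving $\sqrt{3}$, arriving at a polynomial $P\in\mathbb{Q}[a]$ whose positive roots contain every positive zero of $F$. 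Applying Descartes' rule of signs to $P$ (if necessary after a translation $a\mapsto a+c$ that localises sign changes to $(0,a_{0})$) should bound the number of positive roots in that subinterval, and the extraneous roots produced by the two squarings are eliminated by testing the signs of $\sqrt{3}\,a(1+a)^{2}-3$ and $2a-1$ at each candidate. This bookkeeping of spurious roots is the delicate step.

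Once items 1 and 2 are established, item 3 is immediate. We have $F(0)=0$ with $F'(0)=\sqrt{3}/3+3/2>0$, so $F>0$ in a right-neighborhood of $0$ and hence $F>0$ on $(0,z_{1})$; the sign changes at $z_{1}$ and $z_{2}$ together with the uniqueness of the zeros force $F<0$ throughout $(z_{1},z_{2})$; and $F(a)\to+\infty$ as $a\to\infty$ yields $F>0$ on $(z_{2},\infty)$.
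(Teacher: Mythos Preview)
Your derivative computation and the observation that $F'(a)>0$ for $a\ge a_{0}=(2+\sqrt{6})/4$ are correct, and this cleanly settles uniqueness of the zero on $[a_{0},\infty)$. The genuine gap is uniqueness on $(0,a_{0})$. You only outline an approach---square away the half-integer power, then square again to kill the $\sqrt{3}$, then invoke Descartes' rule on the resulting polynomial $P$---but you do not carry it out. After the two squarings $P$ has degree $24$; you give no coefficients, no sign sequence, and no accounting of the spurious roots introduced at each squaring. Descartes' rule on a polynomial you have not written down is not a proof. You yourself flag this as ``the delicate step,'' but as it stands it is an \emph{incomplete} step: nothing guarantees that the sign-change count will actually be $1$ on $(0,a_{0})$, or that the extraneous roots can all be discarded by the sign tests you mention.

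The paper avoids this difficulty with a different decomposition. It rewrites $F(a)=0$ as $g_{1}(a)=g_{2}(a)$ with
\[
g_{1}(a)=(1+a^{2}-a)^{3/2},\qquad g_{2}(a)=\frac{3(2a-1)(1+a)^{2}}{\sqrt{3}\,a(1+a)^{2}-3},
\]
and then analyzes the two functions separately: $g_{1}$ is elementary (decreasing on $(0,1/2)$, increasing afterwards), while the critical points and the single pole of the rational function $g_{2}$ are located by Descartes' rule applied to polynomials of degree $3$, not $24$. A comparison of the values at $0$ and of the slopes on suitable subintervals then yields exactly one intersection in $(0,1/2)$ and one beyond the pole. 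This is considerably less laborious than your squaring scheme and requires no bookkeeping of spurious roots.

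A small correction on item~2: your claim that $F(a)$ is ``a rational number plus a rational multiple of $\sqrt{3}$'' at each of the four endpoints is only true at $a=3/8$, where $1+a^{2}-a=49/64$ is a perfect square. For $a=7/16,\,25/16,\,13/8$ the quantity $1+a^{2}-a$ equals $193/256,\,481/256,\,129/64$ respectively, none of which is a square, so the term $(1+a^{2}-a)^{3/2}$ is a genuine irrational not commensurable with $\sqrt{3}$. The conclusion (the four sign checks) is still correct, but you need honest numerical enclosures rather than the algebraic shortcut you describe.
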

\begin{proof}
For $n=3$ the functions $C_{12}$ and $C_{21}$ are
\begin{equation}
C_{12}(a) = \dfrac{2-a}{(1+a^2-a)^{3/2}}+\dfrac1{(1+a)^2}, \quad
C_{21}(a) = \dfrac{2a-1}{(1+a^2-a)^{3/2}}+\dfrac1{(1+a)^2},
\label{eq:funCn3}
\end{equation}
and the function $F$ is
\begin{equation}
  F(a)=\dfrac{\sqrt{3}}3\,a-\dfrac{2a-1}{(1+a^2-a)^{3/2}}-\dfrac1{(1+a)^2}.
\label{eq:funFn3}
\end{equation}
$F(a)=0$ is equivalent to $g_1(a)=g_2(a)$ where
$$ g_1(a)=(1+a^2-a)^{3/2}, \quad \hbox{and} \quad g_2(a)=\dfrac{3(2a-1)(1+a)^2}{\sqrt{3}a^3+2\sqrt{3}a^2+\sqrt{3}a-3}.$$

The function $g_1\in{\cal C}^1$, is positive, decreases in $(0,1/2)$, increases in $(1/2,\infty)$, $g_1(1/2)=3\sqrt{3}/8$ and $\lim_{a\to\infty}g_1(a)=+\infty$.

The function $g_2$ has only one positive zero at $a=1/2$, and using Descarte's rule and a Bolzano's argument, there exists only one pole at $a=c\in(5/8,3/4)$. Furthermore, $g_2(a)<0$ for $a\in(1/2,c)$ and positive otherwise, and $\lim_{a\to+\infty} g_2(a)=6/\sqrt{3}$.
Its derivative is
$$g_2'(a)= 3\sqrt{3}\,\frac { \left( 1+a \right)  \left(a^3+3a^2+(3-6\sqrt{3})a+1 \right) }{ \left( \sqrt {3}a+2\,
\sqrt {3}{a}^{2}+\sqrt {3}{a}^{3}-3 \right) ^{2}}
$$
and, using again Descarte's rule and Bolzano's theorem, it has only two positive zeros, one located in $(0,1/2)$, which is a local maximum, and one in $(23/16,3/2)$, which is a local minimum. We denote by $a_m$ the local minimum.

First we show that there exits only one solution of $g_1(a)=g_2(a)$ in $(0,1/2)$, denoted by $z_1$. This is a direct consequence of the fact that $g_1(0)=g_2(0)=1$, $g_1$ decreases in the interval $(0,1/2)$, $g_2'(0)>0$ and $g_2$ has unique critical point which is a local maximum in the same interval.

Next we show that  there exits only one solution of $g_1(a)=g_2(a)$ in $(c,\infty)$, denoted by $z_2$. Recall that the function $g_2$ decreases for $(c,a_m)$, increases for $a>a_m$, and has a horizontal asymptote when $a\to \infty$, whereas the function $g_1$ tends to infinity. Thus, at least there must exists one solution. Moreover, $g_1'(a)$ is an increasing function, so that $g_1'(a) > g_1'(1)=3/2$ for $a>1$, and
\begin{eqnarray*}
  g_2'(a) &\leq & 3\sqrt{3}\,\dfrac { \left( 1+a \right) (5a^3+(3-6\sqrt{3})a)}{\left( \sqrt {3}a+2\,
\sqrt {3}{a}^{2}+\sqrt {3}{a}^{3} \right) ^{2}}
= \dfrac{\sqrt{3}}{a} \dfrac { \left( 1+a \right) (5a^3-5a)}{\left( (1+a)^4 \right) ^{2}} \\
 &=& \dfrac{5\sqrt{3}}{a} \dfrac{a-1}{(a+1)^2} \leq  \dfrac{5\sqrt{3}}{8}
\end{eqnarray*}
for $a>1$. Therefore, there cannot exist more than one solution to the equation $g_1(a)=g_2(a)$.  See Figure~\ref{fig:plot_prop1}.
\begin{figure}[!ht]
\centering
\includegraphics[scale=0.3]{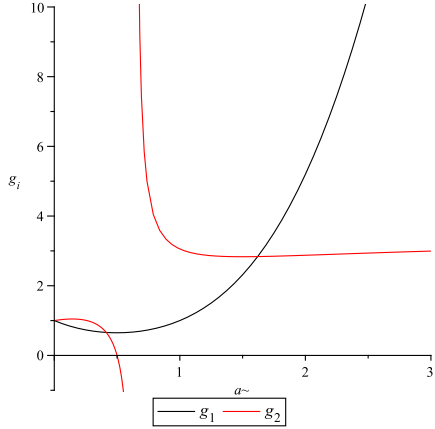}
\caption{Functions $g_1(a)$ and $g_2(a)$ used in the proof of Proposition \ref{propF3}}
\label{fig:plot_prop1}
\end{figure}

It is clear from the above  that  $z_1 < 1 /2$ and $c< z_2$. In fact, using Bolzano's theorem, it can be shown that
$$ z_1\in(3/8,7/16) \quad \hbox{ and } \quad z_2\in(25/16,13/8).$$

Finally, from the fact that $F(1/4)>0$, $F(1)<0$ and $F(2)>0$ we have that $F(a)$ is negative for $z\in(z_1,z_2)$,
and positive otherwise. This concludes the proof.
\end{proof}

\begin{propn}[\ref{propFn}]
Let $F$ be the function defined in (\ref{eq:k2}). Then, for $n\geq 4$, the following statements hold.
\begin{enumerate}
  \item The equation $F(a)=0$ has at least two positive solutions, denoted by $z_1$ and $z_2$.
  \item Suppose that $z_1$ and $z_2$ are the only positive solutions of $F(a)=0$. Then
  \begin{enumerate}
    \item $z_1 < 1 < z_2$ and $z_1z_2 >1$.
    \item $F(a)<0$ for $a\in(z_1,z_2)$, and $F(a)>0$ for $a\in(0,z_1)\cup(z_2,\infty)$.
  \end{enumerate}
\end{enumerate}
\end{propn}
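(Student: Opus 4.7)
The plan breaks into three logical parts, mirroring the structure of the statement.

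First, for part (1), I would harvest from Lemma~\ref{lemFn} three pieces of boundary data for $F$: it is positive on some right-neighborhood of $0$ (since $\lim_{a\to 0^+}F(a)/a = S_n+n/2 > 0$), it is negative at $a=1$ (since $F(1)=S_n-C_{21}(1)<0$), and it diverges to $+\infty$ as $a\to\infty$. Two applications of Bolzano's theorem, one on $(0,1)$ and one on $(1,\infty)$, then produce at least one zero of $F$ in each interval, so $F$ has at least two positive zeros.

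Next, for part (2b) and the first inequality $z_1<1<z_2$ in (2a), I would argue as follows. Under the hypothesis that $z_1$ and $z_2$ are the only positive roots, the two zeros produced in the previous step must be exactly these, one in $(0,1)$ and one in $(1,\infty)$, forcing $z_1<1<z_2$. The sign pattern is then automatic: with only two simple sign-changes and the boundary behavior above, $F>0$ on $(0,z_1)\cup(z_2,\infty)$ and $F<0$ on $(z_1,z_2)$.

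For the product bound $z_1z_2>1$, the natural tool is Lemma~\ref{lemZn}, which, under the same uniqueness hypothesis, gives the finer localization $1-1/n<z_1<1<1/z_1<n/(n-1)<z_2$ for $n\geq 5$; the middle and rightmost inequalities rearrange to exactly $z_1z_2>1$. The case $n=4$ is not covered by Lemma~\ref{lemZn} and has to be handled separately, by exploiting the explicit elementary expressions for $C_{12}$ and $C_{21}$ when $n=4$, pinning $z_1$ and $z_2$ into sufficiently narrow rational brackets by evaluating $F$ at a few test points and applying Bolzano, and then checking the product inequality arithmetically.

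The main obstacle is clearly the product bound. Parts (1) and (2b) are essentially bookkeeping once Lemma~\ref{lemFn} is in hand, but $z_1z_2>1$ is a genuinely quantitative statement about the relative locations of the two roots and cannot be extracted from the qualitative behavior of $F$ alone. An equivalent reformulation via the symmetry $F(a)=a\,G(1/a)$ from Lemma~\ref{lemma1} is to show $G(z_2)>0$, i.e., $z_2^3 C_{12}(z_2)<S_n$; but this still requires a delicate estimate of $C_{12}$ at the unknown root $z_2$, which is exactly the kind of term-by-term bound on the trigonometric sums that Lemma~\ref{lemZn} performs. That localization lemma is therefore the real engine of the product inequality, and my plan defers to it rather than attempting a new direct proof.
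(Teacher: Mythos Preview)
Your proposal is correct and follows essentially the same route as the paper: Lemma~\ref{lemFn} plus Bolzano for part~(1) and the sign pattern in~(2b), Lemma~\ref{lemZn} for $z_1z_2>1$ when $n\geq 5$, and a separate treatment of $n=4$ via explicit evaluation of $F$ at test points. The paper carries out the $n=4$ step concretely by checking $F(2/3)>0$ and $F(3/2)<0$, which gives $z_1>2/3$ and $z_2>3/2$ and hence $z_1z_2>1$.
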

\begin{proof}
The first statement follows from Lemma~\ref{lemFn} and Bolzano's Theorem applied in the intervals $[\epsilon,1]$ and $[1,1/\epsilon]$ for $\epsilon$ small enough. Furthermore, it is clear that $z_1<1<z_2$, and supposing that they are the
only positive solutions of $F(a)=0$, we get easily that $F(a)<0$ for $a\in(z_1,z_2)$, and $F(a)>0$ for $a\in(0,z_1)\cup(z_2,\infty)$. We remark that this fact was already known by Roberts, and proved in \cite{PhDRoberts}.

From Lemma~\ref{lemZn} the property $z_1z_2>1$ follows immediately for $n\geq 5$.
For $n=4$, the result of that lemma is not true (it can be checked numerically).
We claim that, when $n=4$, $F(2/3)>0$ and $F(3/2)<0$. 
Using the fact that $F(1)<0$ and $\lim_{a\to+\infty}F(a) = +\infty$, we get that $ 2/3 < z_1 < 1$ and $z_2>3/2$, so in particular $z_1z_2>1$.

The proof of the claim relies on the fact that for $n=4$ we can write $F(a)=f_1(a)-f_2(a)-f_3(a)$ where
$$ f_1(a)=\left(\frac14 +\frac{\sqrt{2}}2\right)a, \qquad
f_2(a)=\frac{2a-\sqrt{2}}{(1+a^2-a\sqrt{2})^{3/2}}, \qquad
f_3(a)=\frac{2a+\sqrt{2}}{(1+a^2+a\sqrt{2})^{3/2}}.$$
The following equalities and bounds are straightforward computations from which we obtain that $F(2/3)>0$ and $F(3/2)<0$.
\begin{align*}
  f_1(2/3) &= \frac16(1+2\sqrt{2}) > \frac35, & f_1(3/2) &= \frac38(1+2\sqrt{2}) < \frac32, \\
  f_2(2/3) &= -9\frac{3\sqrt{2}-4}{(13-6\sqrt{2})^{3/2}} < \frac{-1}5, &
   f_2(3/2) &= 8\frac{3-\sqrt{2}}{(13-6\sqrt{2})^{3/2}} > \frac{13}{10},
   \\
  f_3(2/3) &= 9\frac{3\sqrt{2}+4}{(13+6\sqrt{2})^{3/2}} < \frac{4}5, &
   f_3(3/2) &= 8\frac{3+\sqrt{2}}{(13+6\sqrt{2})^{3/2}} > \frac{3}{10}.
\end{align*}
These computations conclude the proof of the proposition.
\end{proof}

\section{Acknowledgments}

Both authors would like to thank Gareth Roberts for fruitful discussions and suggestions. Conversations with Richard Moeckel during his stay at CRM are greatly appreciated. 

E.Barrab\'es has been supported by the Spanish grant MTM2013-41168, and the Catalan grant 2014SGR1145. J.M. Cors has been supported by MINECO/FEDER grant numbers MTM2013-40998 and by AGAUR grant number 2014SGR568.

\end{document}